\documentclass[11pt,a4paper,reqno]{amsart}

\usepackage{amsmath}
\usepackage{amssymb}
\usepackage{amsthm}
\usepackage[shortlabels]{enumitem}
\usepackage[british]{babel}
\usepackage{hyperref}

\calclayout
\hypersetup{
 colorlinks=true,
 linkcolor=blue,
 citecolor=blue,
 urlcolor=blue,
}

\theoremstyle{plain}
\newtheorem{theorem}{Theorem}[section]
\newtheorem{proposition}[theorem]{Proposition}
\newtheorem{lemma}[theorem]{Lemma}
\newtheorem{corollary}[theorem]{Corollary}
\theoremstyle{remark}
\newtheorem{remark}[theorem]{Remark}
\numberwithin{equation}{section}

\newcommand{\R}{\mathbb{R}}
\newcommand{\CC}{\mathbb{C}}
\newcommand{\Leb}{\mathrm{L}}
\newcommand{\C}{\mathrm{C}}
\newcommand{\Sob}{\mathrm{H}}

\newcommand{\fra}{\mathfrak{a}}
\newcommand{\frb}{\mathfrak{b}}
\newcommand{\frB}{\mathfrak{B}}
\newcommand{\Id}{\mathrm{Id}}
\newcommand{\eps}{\varepsilon}
\newcommand{\abs}[1]{\left\lvert#1\right\rvert}
\newcommand{\norm}[1]{\left\|#1\right\|}
\newcommand{\dd}{\,\mathrm{d}}
\renewcommand{\Subset}{\subset\!\subset}
\DeclareMathOperator{\supp}{supp}

\title[Lions' Maximal Regularity Problem for Differential Operators]
{Lions' Maximal Regularity Problem for Divergence-Form Differential Operators:\\ Failure at the $\frac{1}{2}$-H\"older Endpoint}

\author{Lukas Niebel}
\address[Lukas Niebel]{ETH Z\"urich, Department of Mathematics, R\"amistrasse 101, 8092 Z\"urich, Switzerland.}
\email{lukas.niebel@math.ethz.ch}
\date{21st August 2026}

\subjclass[2020]{35K90 (Primary) 35B65, 35R05 (Secondary)}
\keywords{Non-autonomous maximal regularity, Lions' problem, divergence-form operators,
real-symmetric coefficients, bounded domains}

\begin{document}

\begin{abstract}
  In this work we give a counterexample to maximal $\mathrm{L}^2$-regularity in Lions' problem for divergence-form differential operators. On a bounded interval, we construct a bounded, uniformly elliptic, real scalar diffusion coefficient that is $\frac{1}{2}$-H\"older continuous in time with values in spatial $\mathrm{L}^\infty$. It can be chosen arbitrarily close to the constant coefficient of the heat equation. For zero initial data and a forcing term that is continuous in time with square-integrable spatial values, the unique Lions variational solution has a time derivative that is not square integrable in space-time. Thus $\frac{1}{2}$-H\"older continuity alone does not imply maximal $\mathrm{L}^2$-regularity, even for arbitrarily small scalar perturbations of the heat equation. The construction is based on a lacunary family of oscillatory trigonometric modes localised on shrinking time intervals. The spatial profile and the oscillatory modes, together with their first spatial derivatives, vanish at both endpoints. This permits zero extension of the counterexample to the real line. Tensorisation and localisation by parabolic rescaling then yield real symmetric isotropic counterexamples on $\mathbb{R}^d$ and on every bounded domain $\Omega\subset\mathbb{R}^d$, for all $d\ge1$.
\end{abstract}

\maketitle

\section{Introduction}

\subsection*{Lions' maximal-regularity problem}

Let $T>0$. We consider the non-autonomous Dirichlet problem
\begin{equation}\label{eq:intro-equation}
  \begin{cases}
    \partial_tu(t)-\partial_x\bigl(\fra(t,\cdot)\partial_xu(t)\bigr)=f(t)
     & \text{in }(0,T)\times(0,\pi), \\
    u(t,0)=u(t,\pi)=0
     & \text{for }t\in(0,T),         \\
    u(0)=u_0.
  \end{cases}
\end{equation}
Suppose that $\fra$ is measurable and real-valued and that there
are constants $0<\lambda\le\Lambda<\infty$ such that $\lambda\le \fra(t,x)\le\Lambda$
for almost every $(t,x)\in(0,T)\times(0,\pi)$.
In this setting Lions' variational theory gives for every
\[
  f\in\Leb^2(0,T;\Sob^{-1}(0,\pi)),
  \qquad u_0\in\Leb^2(0,\pi),
\]
a unique variational solution
\[
  u\in\Leb^2(0,T;\Sob_0^1(0,\pi))
  \cap\Sob^1(0,T;\Sob^{-1}(0,\pi)).
\]
Such solutions are called energy solutions in some of the literature;
throughout, we use the term variational solution.
Hence, \eqref{eq:intro-equation} is well posed in this variational class.
However, a priori both the time derivative and the elliptic part take values
only in the negative Sobolev space $\Sob^{-1}(0,\pi)$.

Let us from now on assume that $u_0=0$ and that the forcing has the better regularity
$f\in\Leb^2(0,T;\Leb^2(0,\pi))$. The maximal $\Leb^2$-regularity problem is whether
the equation improves the regularity to
\begin{equation}
  \partial_tu\in\Leb^2(0,T;\Leb^2(0,\pi)),
  \qquad
  -\partial_x(\fra\partial_xu)
  \in\Leb^2(0,T;\Leb^2(0,\pi)).
\end{equation}
A positive answer implies that the variational solution has maximal
$\Leb^2$-regularity in $H$, rather than merely having the regularity defining
the variational class above.

We recall the standard abstract formulation. Throughout, complex Hilbert-space
inner products and sesquilinear forms are linear in their first argument. We
write $V'$ for the anti-dual of $V$, and the anti-duality pairing
$\langle\cdot,\cdot\rangle_{V',V}$ is linear in its first argument and
conjugate-linear in its second. Let
$V\hookrightarrow H\hookrightarrow V'$ be a Gelfand triple and let
$\mathcal A(t):V\to V'$ be the operator associated with a measurable family
of uniformly bounded, quasi-coercive forms with common domain $V$.
Then, for $f\in\Leb^2(0,T;V')$ and $u_0\in H$, Lions' theorem gives a unique
variational solution
\[
  u\in\Leb^2(0,T;V)\cap\Sob^1(0,T;V')
\]
of $\partial_tu+\mathcal A(t)u=f$ in $V'$ with $u(0)=u_0$; see
\cite{Lions1961}. For zero initial
data and $f\in\Leb^2(0,T;H)$, let $u$ be the corresponding variational
solution. For $t\in[0,T]$, define the realisation in $H$ by
\[
  D(A_H(t))
  :=\bigl\{v\in V:\mathcal A(t)v\in H\bigr\},
  \qquad
  A_H(t)v:=\mathcal A(t)v.
\]
We say that $u$ has maximal $\Leb^2$-regularity in $H$ if
\[
  \partial_tu\in\Leb^2(0,T;H),
  \qquad
  u(t)\in D(A_H(t))\quad\text{for a.e. }t,
  \qquad
  A_H(\cdot)u(\cdot)\in\Leb^2(0,T;H).
\]
Since
\[
  \mathcal A(t)u(t)=f(t)-\partial_tu(t)\quad\text{in }V'
\]
for almost every $t$, the first condition is equivalent to requiring both of the latter two conditions.

\subsection*{The \texorpdfstring{$\frac{1}{2}$}{1/2}-endpoint question}

It is well known that the answer to Lions' problem depends on the regularity of the form.
A classical result of Lions gives maximal regularity for symmetric $\C^1$
families and zero initial data; see \cite{Lions1961}. A central question is
whether the time dependence can be weakened to continuity, or even to
measurability. In the autonomous case the corresponding
result follows from de Simon's theorem \cite{deSimon1964}. In the general
common-domain form setting, with temporal regularity measured in
$\mathcal L(V,V')$ one half of a time derivative is the critical scale.
Ouhabaz and Spina proved maximal regularity
under $\C^{0,\alpha}$ dependence with $\alpha>1/2$
\cite{OuhabazSpina2010}. Further positive results cover symmetric forms of
bounded variation \cite{Dier2015}, suitable Dini moduli
\cite{HaakOuhabaz2015}, and fractional Sobolev regularity of order strictly
larger than $1/2$ \cite{DierZacher2017}; see also \cite{Trostorff2021}. We refer to
\cite{ArendtDierFackler2017} for a broader account of Lions' problem.

At the endpoint $1/2$, known positive results use additional structure beyond the estimate
\[
  \norm{\fra(t,\cdot)-\fra(s,\cdot)}_{\Leb^\infty(0,\pi)}
  \lesssim\abs{t-s}^{1/2}.
\]
For divergence-form operators, Auscher and Egert impose a scale-invariant
square condition pointwise in space, with a bound uniform in the spatial
variable \cite[Theorem~2, condition~(8)]{AuscherEgert2016}.
Achache and Ouhabaz obtained an abstract $\Leb^2$ result when the operator
path is piecewise $\Sob^{1/2}(0,T;\mathcal L(V,V'))$, together with the
uniform Kato square-root property and their local smallness
condition~(2.1) \cite[Theorem~2.2]{AchacheOuhabaz2019}.

Auscher and Egert conjectured that
$\C^{0,1/2}([0,T];\Leb^\infty(0,\pi))$ regularity is insufficient for
divergence-form operators \cite[Conjecture~14]{AuscherEgert2016}. Dier
constructed an abstract negative example for nonsymmetric,
discontinuous forms that exploits failure of the Kato square-root property
\cite[Section~5.2]{DierThesis2014}. Fackler subsequently
proved the failure for abstract symmetric forms with $\C^{0,1/2}$ time
dependence \cite[Theorem~5.1]{Fackler2017}. Both examples are abstract and
do not arise from differential operators. In one space dimension, Krylov
obtained earlier negative results for Sobolev solvability of scalar parabolic
equations with bounded measurable coefficients \cite{Krylov2016}. These
concern different $\Leb^p$-Sobolev formulations and do not address Lions'
$\Leb^2(0,T;\Leb^2(0,\pi))$ endpoint problem under
$\C^{0,1/2}([0,T];\Leb^\infty(0,\pi))$ regularity.

We prove that the obstruction occurs for a real scalar
coefficient on a bounded interval. It is neither the consequence of a loss of ellipticity nor that of
a large perturbation. We thus prove the conjecture of Auscher and
Egert \cite[Conjecture~14]{AuscherEgert2016}. The careful construction allows zero extension to the real line.
Tensorisation and localisation by parabolic rescaling extend the one-dimensional counterexample to real symmetric isotropic counterexamples
on $\R^d$ and on every bounded domain $\Omega\subset\R^d$, for every
$d\ge1$. In particular, the obstruction is neither a
boundary phenomenon nor related to the dimension or geometry of the underlying
domain.

The divergence-form counterexamples of Bechtel, Mooney, and Veraar use
non-Hermitian coefficients in dimension $d\ge2$, and their coefficient paths
are not continuous in the spatial $\Leb^\infty$ topology
\cite[Theorem~2.7 and Remark~2.8]{BechtelMooneyVeraar2024}. They explicitly
left open the symmetric or Hermitian case, the one-dimensional case, and the
case of continuous or H\"older-continuous coefficient paths. Theorem~\ref{thm:main}
resolves these issues simultaneously for a real scalar coefficient.

Before we state our main result, we introduce some helpful notation
\[
  H=\Leb^2((0,\pi);\CC),\qquad
  V=\Sob_0^1((0,\pi);\CC),\qquad V'=\Sob^{-1}((0,\pi);\CC).
\]
For $0<\alpha<1$ and a Banach space $X$, we use
$\C^{0,\alpha}([0,T];X)$ for the Banach-valued H\"older space, with norm
\[
  \norm{g}_{\C^{0,\alpha}([0,T];X)}
  :=
  \sup_{t\in[0,T]}\norm{g(t)}_X
  +
  \sup_{\substack{s,t\in[0,T]\\s\neq t}}
  \frac{\norm{g(t)-g(s)}_X}{\abs{t-s}^\alpha}.
\]
We write $[g]_{\C^{0,\alpha}([0,T];X)}$ for the second supremum.
Here, $\C^1$ denotes the space of continuously differentiable functions.
We use $\norm{v}_V=\norm{\partial_xv}_{\Leb^2(0,\pi)}$, which is an equivalent norm on $V$.
For a bounded real coefficient $\fra(t,x)$, we define
$\mathcal A(t):V\to V'$ by
\[
  \langle\mathcal A(t)v,w\rangle_{V',V}
  =\int_0^\pi \fra(t,x)\partial_xv(x)
  \overline{\partial_xw(x)}\dd x,
  \qquad v,w\in V.
\]
Distributionally,
\[
  \mathcal A(t)v=-\partial_x\bigl(\fra(t,\cdot)\partial_xv\bigr).
\]
We have
\[
  \norm{\mathcal A(t)-\mathcal A(s)}_{\mathcal L(V,V')}
  \le\norm{\fra(t,\cdot)-\fra(s,\cdot)}_{\Leb^\infty(0,\pi)}.
\]

\begin{theorem}[Counterexample on an interval]\label{thm:main}
  Let $\delta>0$. Then there exists a bounded, uniformly elliptic,
  real-valued coefficient $\fra$ satisfying
  \[
    \fra\in\C^{0,1/2}\bigl([0,1];\Leb^\infty(0,\pi)\bigr),
    \qquad
    \norm{\fra-1}_{\C^{0,1/2}
      ([0,1];\Leb^\infty(0,\pi))}<\delta,
  \]
  but
  \[
    \fra\notin\C^{0,\alpha}([0,1];\Leb^\infty(0,\pi))
    \qquad\text{for every }\frac12<\alpha<1.
  \]
  There also exist a real-valued forcing term
  $f\in\C([0,1];\Leb^2(0,\pi)) \cap \Leb^2(0,1;\Leb^2(0,\pi))$ and a real-valued function
  \[
    u\in\C([0,1];V)
    \cap\C^1([0,1];V')
    \cap W^{1,1}(0,1;H)
  \]
  such that $u(0)=0$ and $u$ is the unique Lions variational solution of
  \[
    \partial_tu(t)+\mathcal A(t)u(t)=f(t)
  \]
  in $V'$ for every $t\in[0,1]$. Moreover,
  \[
    u(t)\in D(A_H(t))\qquad\text{for every }t\in[0,1],
  \]
  and
  \[
    A_H(\cdot)u(\cdot)\in\Leb^1(0,1;H),
    \qquad
    A_H(\cdot)u(\cdot)\notin\Leb^2(0,1;H),
    \qquad
    \partial_tu\notin\Leb^2(0,1;H).
  \]
  In particular, maximal $\Leb^2$-regularity cannot hold for this diffusion
  coefficient.
\end{theorem}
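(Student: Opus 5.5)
We will construct the counterexample backwards: first prescribe $u$ and $\fra$ explicitly, then define $f:=\partial_tu+\mathcal A(t)u$ and verify that $f$ has the claimed regularity. Uniqueness of the Lions variational solution then identifies $u$ as the solution.

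\textbf{Ansatz.} Fix a lacunary sequence of frequencies $n_k=2^{2^k}$ (or $n_k=4^k$, to be tuned). Choose disjoint time intervals $I_k=[t_k,t_k+\tau_k]$ accumulating at $t=1$ (or at $0$), with $\tau_k\approx n_k^{-2}$ up to a slowly varying factor. On $I_k$ we set
\[
  \fra(t,x)=1+\eps_k\,\chi_k(t)\,\cos(n_k x)\,\psi(x),
\]
where $\chi_k$ is a smooth bump adapted to $I_k$ and $\psi$ is a fixed profile vanishing to second order at $0,\pi$. Outside $\bigcup_k I_k$ we take $\fra\equiv1$. Since $\eps_k|\chi_k(t)-\chi_k(s)|\lesssim\eps_k\min(1,|t-s|/\tau_k)$, the $\frac12$-H\"older seminorm on $I_k$ is $\lesssim\eps_k\tau_k^{-1/2}$. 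Taking $\eps_k=\eta\,\tau_k^{1/2}$ with $\eta$ small gives $[\fra]_{\C^{0,1/2}}<\delta$. Cross-interval increments are controlled the same way because the supports are disjoint and the $\eps_k$ decay. Conversely, for $\alpha>\frac12$ the ratio $\eps_k\tau_k^{-\alpha}=\eta\tau_k^{1/2-\alpha}\to\infty$, so $\fra$ is not $\alpha$-H\"older.

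For $u$ we take $u(t,x)=\sum_k\beta_k\chi_k(t)\,\phi(x)\sin(n_kx)/n_k$, with $\phi$ again vanishing to second order at the endpoints, so that $\partial_xu\approx\beta_k\chi_k\phi\cos(n_kx)$.

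\textbf{Key mechanism.} The product $\fra\,\partial_xu$ contains $\eps_k\beta_k\chi_k^2\psi\phi\cos^2(n_kx)$. Its nonoscillating part $\frac12\eps_k\beta_k\chi_k^2\psi\phi$ has an $x$-derivative of size $\eps_k\beta_k$, which is $n_k$ times smaller than the derivatives of the oscillating parts. The oscillating parts of $\mathcal A(t)u$ have size $n_k\beta_k$ in $H$, and so does $\partial_tu$, of size $\beta_k\tau_k^{-1}n_k^{-1}\approx\beta_kn_k$. We do not want $f$ to absorb these large oscillating contributions. We therefore instead choose $u$ so that $\partial_tu$ and $-\partial_x(\fra\partial_xu)$ cancel at high frequency, up to an $H$-small remainder. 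This is the correct design principle: $f=\partial_tu+\mathcal Au$ stays bounded in $H$ while each of the two terms separately is large.

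Concretely, on $I_k$ we let $u$ solve a frequency-$n_k$ resonant profile. Write
\[
  u=\beta_k\bigl(w_k(t,x)+\text{corrector}\bigr),\qquad \partial_tw_k-\partial_x^2w_k=0
\]
at leading order, so that $w_k$ is a modulated heat mode $e^{-n_k^2(t-t_k)}\sin(n_kx)$. The perturbation $\eps_k\cos(n_kx)$ then couples frequencies $n_k$ and $0$ (through $\cos^2$) and $2n_k$. The coupling to frequency $0$ produces a slowly varying spatial component $g_k(t,x)$, of amplitude $\sim\eps_k\beta_kn_k$ in the flux, which is not absorbed by the heat flow at frequency $n_k$. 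It is this component that must be carried by $u$ itself. Choosing the coefficient phase so that $\fra\partial_xu$ has a low-frequency part of size $\eps_kn_k\beta_k$ gives
\[
  \|A_H(t)u\|_H\gtrsim\eps_kn_k\beta_k\approx\eta\beta_k\quad\text{on }I_k,
\]
while $f$ is kept of size $\lesssim$ the same on $I_k$ but vanishing as $k\to\infty$ in $\C([0,1];H)$ after a suitable renormalisation. The divergence is then engineered through the counting:
\[
  \sum_k\tau_k\,(\eps_kn_k\beta_k)^2=\infty,\qquad \sum_k\tau_k\,\eps_kn_k\beta_k<\infty,
\]
which gives $A_Hu\in\Leb^1$ and $A_Hu\notin\Leb^2$, hence $\partial_tu\notin\Leb^2$. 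For $f$ to lie in $\C([0,1];H)$ we need $\sup_{I_k}\|f\|_H\to0$. This requires that the leading terms cancel exactly, leaving only terms of size $\eps_k^2n_k\beta_k$ or $\beta_k\tau_k'$ from the bump derivatives, so that $f$ decays even though $\|A_Hu\|\sim\eta\beta_k$. A good choice for the amplitudes is $\beta_k=1/(k\,\eta\tau_k^{1/2})$ with $\sum_k\tau_k^{1/2}/k<\infty$ but $\sum_k 1/k^2\cdot(\ldots)$ tuned appropriately.

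\textbf{Main obstacle and regularity checks.} The hardest step is the exact cancellation. We need an explicit two-scale profile $u$ on $I_k$ such that $\partial_tu-\partial_x(\fra\partial_xu)$ is small in $\C(H)$ while $\partial_x(\fra\partial_xu)$ is not. The first thing we would try is an ansatz $u=\beta_k\chi_k(t)\bigl[\phi\sin(n_kx)/n_k+\Phi_k(x)\bigr]$, with $\Phi_k$ solving
\[
  (1+\eps_k\cos(n_kx)\psi)\,\Phi_k'=c_k(t)-\eps_k\psi\phi\cos^2(n_kx).
\]
This gives an explicit flux with controlled divergence, and the time derivative coming from the bump can then be matched. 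Once the profiles are fixed, the remaining checks are routine. These are: $u\in\C([0,1];V)\cap\C^1([0,1];V')\cap W^{1,1}(0,1;H)$, using that the series converges in these norms thanks to the summability conditions; $u(t)\in D(A_H(t))$ for every $t$, since $\fra\partial_xu$ is Lipschitz in $x$; and the vanishing of $\psi,\phi$ and their first derivatives at the endpoints, which is needed for the zero extension.
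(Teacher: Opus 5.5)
There is a genuine gap at the core of your proposal. You never construct $u$ so that $f=\partial_tu+\mathcal A(t)u$ stays in $\C([0,1];H)$ while $\partial_tu\notin\Leb^2(0,1;H)$; you stop at ``the first thing we would try''. Moreover, the mechanism you sketch cannot work with a single frequency $n_k$ and a single bump $\chi_k$ of length $\tau_k\approx n_k^{-2}$ per block.

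The following is a heuristic obstruction, the same one the paper's introduction points out:
\begin{itemize}
\item A component $\beta_kn_k^{-1}\chi_k(t)\phi\sin(n_kx)$ cannot be switched on and off within the parabolic time $n_k^{-2}$ without a source at frequency $n_k$ of $H$-size $\approx\beta_kn_k$. Indeed, $(\partial_t-\partial_x^2)$ acting on it gives $\beta_kn_k(\chi'+\chi)\phi\sin(n_kx)$ up to lower order, and this cannot vanish.
\item If $f$ is to stay small, that source must come from $\eps_k\chi_kL_{\cos(n_kx)\psi}$ acting on a low-frequency part of $u$. Such a part is bounded for any variational solution with $f\in\Leb^2(0,1;H)$: projecting onto finitely many modes gives a finite-dimensional ODE with $\Leb^2$ forcing.
\item Hence $\beta_kn_k\lesssim\eps_kn_k\approx\eta$. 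So $\norm{\partial_tu}_H\lesssim\eta$ on $I_k$, and block $k$ contributes at most about $\eta^2\tau_k$ to $\norm{\partial_tu}_{\Leb^2(0,1;H)}^2$. This is summable because the blocks are disjoint in $[0,1]$.
\item The back-coupling to frequency $0$ that you invoke is lower order, not a source of growth. You compute yourself that the low-frequency part of the flux has size $\eps_k\beta_k$, but you then use $\eps_kn_k\beta_k$.
\item Even your own bookkeeping fails. With $\eps_kn_k\approx\eta$ and $\beta_k=1/(k\eta\tau_k^{1/2})$, you get $\sum_k\tau_k(\eps_kn_k\beta_k)^2\simeq\sum_kk^{-2}<\infty$, so there is no divergence.
\end{itemize}

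The paper's construction rests on two ingredients that are missing here.

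First, the coefficient oscillates in time at the parabolic frequency: it is $\eps K^{-1}\eta_j(t)\sin\theta\cos(Kx)$ with $\theta=K^2(t-\tau_j)$, on blocks of length $\ell_j\gg K^{-2}$. It acts on a fixed background $\rho\phi$ with $\phi=\sin^2x$. The identity $L_{K^{-1}\cos(Kx)}\phi=p_K-2K^{-1}c_K$ and the explicit corrector $y_Kp_K$, with $y_K=-\frac{\eps}{2K^2}\eta_j(\sin\theta-\cos\theta)$ solving $y_K'+K^2y_K=-\eps\eta_j\sin\theta$ up to a cutoff error, cancel the $p_K$-terms exactly. This leaves a remainder of $H$-size $O(\eps K^{-1}+\eps K^{-2}\ell_j^{-1})$, while $\partial_ty_K$ has size $\eps$.

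Second, on each block $M_j=j$ lacunary frequencies $K_{j,m}=16^m\ell_j^{-1}$ are active simultaneously.
\begin{itemize}
\item Lacunarity keeps the $\frac12$-H\"older seminorm bounded independently of $M_j$, because $\sum_m\min\{2K_{j,m}^{-1},K_{j,m}h\}\lesssim h^{1/2}$.
\item Spatial orthogonality of the $p_{K_{j,m}}$ gives $\int_{I_j}\norm{\partial_tu}_H^2\gtrsim\eps^2M_j\ell_j$, and $\sum_jM_j\ell_j=\infty$.
\item The interaction term satisfies $\norm{\eps L_\beta v}_H\lesssim\eps^2M_j\sum_mK_{j,m}^{-1}\lesssim\eps^2/(j+1)$, which is what puts $f$ in $\C([0,1];H)$.
\end{itemize}

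Your coefficient-side checks are fine: the H\"older bound, the failure for $\alpha>\frac12$, and the endpoint vanishing. But without many modes superposed at the same time, there is no divergence to find.
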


The spatial boundary-vanishing conditions built into the interval construction
also allow the counterexample to be localised inside an arbitrary bounded domain. After zero
extension, tensorisation, and parabolic rescaling, we obtain the following
counterexample.

\begin{corollary}[Counterexamples on arbitrary bounded domains]
  \label{cor:bounded-domains}
  Let $d\ge1$, let $\Omega\subset\R^d$ be a bounded domain, and let
  $\delta>0$. Set
  \[
    H_\Omega=\Leb^2(\Omega;\CC),
    \qquad
    V_\Omega=\Sob_0^1(\Omega;\CC),
    \qquad
    V_\Omega'=\Sob^{-1}(\Omega;\CC).
  \]
  Then there exists a bounded real-valued scalar coefficient $\frb$ such that
  the isotropic coefficient matrix $\frB(t,x)=\frb(t,x)\Id_d$
  is real symmetric and uniformly elliptic, and satisfies
  \[
    \frB\in\C^{0,1/2}\bigl([0,1];
    \Leb^\infty(\Omega;\R^{d\times d})\bigr),
    \qquad
    \norm{\frB-\Id_d}_{\C^{0,1/2}
      ([0,1];\Leb^\infty(\Omega;\R^{d\times d}))}<\delta.
  \]
  Moreover,
  \[
    \frB\notin\C^{0,\alpha}\bigl([0,1];
    \Leb^\infty(\Omega;\R^{d\times d})\bigr)
    \qquad\text{for every }\frac12<\alpha<1.
  \]
  There also exist a real-valued forcing term
  \[
    F\in\C([0,1];H_\Omega)\cap\Leb^2(0,1;H_\Omega)
  \]
  and a real-valued function
  \[
    U\in\C([0,1];V_\Omega)
    \cap\C^1([0,1];V_\Omega')
    \cap W^{1,1}(0,1;H_\Omega)
  \]
  which is, in particular, an element of
  \[
    \Leb^2(0,1;V_\Omega)\cap\Sob^1(0,1;V_\Omega'),
  \]
  such that $U(0)=0$ and $U$ is the unique Lions variational solution of
  \[
    U'(t)-\operatorname{div}\bigl(\frB(t,\cdot)\nabla U(t)\bigr)=F(t)
  \]
  in $V_\Omega'$ for every $t\in[0,1]$, but
  \[
    U'\notin\Leb^2(0,1;H_\Omega).
  \]
  Consequently,
  \[
    -\operatorname{div}(\frB\nabla U)
    \notin\Leb^2(0,1;H_\Omega),
  \]
  and maximal $\Leb^2$-regularity cannot hold for this diffusion
  coefficient on $\Omega$.
\end{corollary}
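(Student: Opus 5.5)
We derive Corollary~\ref{cor:bounded-domains} from Theorem~\ref{thm:main}, using the boundary-vanishing properties of the interval construction announced in the abstract. Let $(\fra,f,u)$ be the triple from Theorem~\ref{thm:main} with tolerance $\delta/2$. We assume the construction gives $\fra-1$ supported in a compact subset of $(0,\pi)$ (or at least vanishing near the endpoints), and that $u(t)$ and $\partial_xu(t)$ vanish at $x=0,\pi$. This makes the zero extensions
\[
\tilde u(t,x_1):=u(t,x_1)\mathbf 1_{(0,\pi)}(x_1),\qquad \tilde\fra:=1+(\fra-1)\mathbf 1_{(0,\pi)}
\]
well behaved. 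In particular $\partial_{x_1}(\tilde\fra\partial_{x_1}\tilde u)$ has no jump terms at the endpoints, so $\partial_t\tilde u-\partial_{x_1}(\tilde\fra\partial_{x_1}\tilde u)=\tilde f$ on $\R$ in the distributional sense. Moreover $A_H(t)u(t)\in H$ extends to an $\Leb^2$ function without boundary measures. If only $u,\partial_xu$ vanish but $\fra\ne1$ at the boundary, we instead extend $\fra$ by $1$ and use the vanishing of $\fra\partial_xu$ at the endpoints. This is the one point where the precise structure from the proof of Theorem~\ref{thm:main} is needed.

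Next we tensorise. Choose a cube $Q=x_0+r(0,\pi)^d\Subset\Omega$ and a real cutoff profile $\phi\in\C_c^\infty((0,\pi))$, $\phi\not\equiv0$. On $(0,\pi)^d$ set
\[
U_0(t,x)=\tilde u(t,x_1)\prod_{j\ge2}\phi(x_j),\qquad \frb_0(t,x)=\tilde\fra(t,x_1).
\]
Then
\[
-\operatorname{div}(\frb_0\nabla U_0)=\bigl(\mathcal A(t)u\bigr)\prod_{j\ge2}\phi(x_j)-\tilde\fra\,\tilde u\sum_{j\ge2}\phi''(x_j)\prod_{k\ne1,j}\phi(x_k).
\]
The second term lies in $\C([0,1];\Leb^2)$, since $u\in\C([0,1];V)$ and $\tilde\fra$ is bounded. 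Setting $F_0=\partial_tU_0-\operatorname{div}(\frb_0\nabla U_0)$, we therefore get $F_0=f\otimes\phi^{\otimes(d-1)}+(\text{term in }\C([0,1];\Leb^2))$, which lies in $\C([0,1];\Leb^2)$. The function $U_0$ is compactly supported in $x_2,\dots,x_d$ but not in $x_1$. To localise in $x_1$ as well, we either use that $u$ vanishes near the endpoints, or we apply the same product trick, noting that $\tilde u$ is supported in $[0,\pi]$ and vanishes to first order there. Then $U_0(t)\in\Sob^1_0((0,\pi)^d)$, and we extend $U_0$ and $\frb_0-1$ by zero outside the cube.

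Then we rescale parabolically: $U(t,x)=U_0(t,(x-x_0)/r)$ and $\frb(t,x)=\frb_0(t,(x-x_0)/r)$ for $x\in Q$, with $\frb=1$ and $U=0$ elsewhere in $\Omega$. Only the spatial variable is dilated, so $\frb$ stays $\frac12$-Hölder in time with the same seminorm. Its $\Leb^\infty$ norms and ellipticity are unchanged, since $\frb-1$ takes exactly the values of $\fra-1$. Hence $\norm{\frB-\Id_d}<\delta$ (up to a norm-equivalence factor for matrices, absorbed by choosing the $1$D tolerance small). The failure of $\C^{0,\alpha}$ for $\alpha>1/2$ carries over because the $\Leb^\infty$ differences are identical. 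The equation becomes
\[
U'-\operatorname{div}(\frB\nabla U)=F,\qquad F=r^{-2}\times(\text{rescaled }\text{divergence part})+\text{rescaled }\partial_tU_0.
\]
For simplicity we take $r$ fixed and no time rescaling. The factor $r^{-2}$ multiplies only the elliptic part, so we must rescale the coefficient's time variable as well, or equivalently set $U(t,x)=U_0(t/r^2,\cdot)$ on $[0,r^2]$. The cleaner option is to absorb $r$ by using the coefficient $r^{2}\frb$. That spoils closeness to $\Id$, so instead we pick $r=1$ after translating, assuming $\Omega$ contains a cube of side $\pi$. If it does not, we rescale time by $r^2$ and extend trivially: replacing $[0,1]$ with $[0,r^2]\subset[0,1]$ uses only part of the interval, and the Hölder seminorm scales by $r^{-1}$ in time. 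That contradicts smallness. The fix is to apply Theorem~\ref{thm:main} with tolerance $\delta r/2$, since the theorem allows arbitrary $\delta$. On $[r^2,1]$ we continue $\frb$ constantly in time and $U$ by solving the equation, which only adds a regular tail.

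Finally we check regularity and the conclusions. We have $U\in\C([0,1];V_\Omega)\cap\C^1([0,1];V_\Omega')\cap W^{1,1}(0,1;H_\Omega)$ by the tensor structure. Uniqueness follows from Lions' theorem. Moreover $U'\notin\Leb^2(0,1;H_\Omega)$, because
\[
\norm{U'(t)}_{\Leb^2}=c\,\norm{\partial_tu(t/r^2)}_{\Leb^2(0,\pi)}\norm{\phi}^{d-1}_{\Leb^2}
\]
with $c>0$, and $\partial_tu\notin\Leb^2$. The last claim of the corollary then follows from the identity $\mathcal A U=F-U'$. The main obstacle is the first step: verifying that the zero extension creates no boundary contributions. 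This requires extracting from the interval construction that $u,\partial_xu$ (and ideally $\fra-1$) vanish at both endpoints uniformly in $t$.
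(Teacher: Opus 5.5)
Your final argument is correct and is essentially the paper's proof. It consists of zero extension to the line using that the flux $\fra\partial_xu$ vanishes at $x=0,\pi$ (which the construction provides, since $\phi$, $p_K$ and their first derivatives vanish there), tensorisation with a compactly supported transverse cutoff, and parabolic rescaling into a cube $x_0+r[0,\pi]^d\subset\Omega$ with the smallness parameter shrunk by the factor $r$; the time tail is trivial because $u$ and $f$ vanish near $t=1$. The hedged alternatives you float along the way ($\fra-1$ compactly supported in $(0,\pi)$, $u$ vanishing near the endpoints, dilating only in space) either are false for this construction or break the equation, so drop them in favour of the fallbacks you settle on.
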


\subsection*{Mechanism}

Let us now describe the idea of the construction. We first choose the coefficient, then find a candidate solution, and then define the source term as
\[
  f:=\partial_tu+\mathcal A(\cdot)u.
\]
The oscillatory components of the solution are chosen such that the bad
terms in this expression cancel, while their time derivatives are large
in $H$.

We write
\[
  A_{0,H}=-\partial_x^2,
  \qquad
  L_\kappa v=-\partial_x(\kappa\partial_xv).
\]
Let $K>2$ be an integer and set
\[
  \phi(x)=\sin^2x,
  \qquad
  p_K(x)=\sin(Kx)\sin(2x),
  \qquad
  c_K(x)=\cos(Kx)\cos(2x).
\]
The identity
\begin{equation}\label{eq:intro-profile-identity}
  L_{K^{-1}\cos(Kx)}\phi
  =p_K-2K^{-1}c_K
\end{equation}
shows that a small perturbation of the coefficient of size $K^{-1}$ produces an oscillatory
mode whose $H$-norm is independent of $K$. Indeed,
\[
  \norm{p_K}_H=\norm{c_K}_H=\frac{\sqrt\pi}{2},
  \qquad
  \norm{p_K}_{V'}\lesssim K^{-1}.
\]
Thus the divergence-form differential operator turns a coefficient perturbation of size \(K^{-1}\) into an order-one term in \(H\), while the same term is of order \(K^{-1}\) in \(V'\).

We modulate the small coefficient perturbation at temporal frequency $K^2$. Writing
\[
  b_K(t,x)=K^{-1}\sin(K^2t)\cos(Kx),
\]
we have, for $h=\abs{t-s}$,
\[
  \norm{b_K(t,\cdot)-b_K(s,\cdot)}_{\Leb^\infty(0,\pi)}
  \le\min\{2K^{-1},Kh\}
  \lesssim h^{1/2}.
\]
Hence the coefficient changes by size $K^{-1}$ on the parabolic time scale
$K^{-2}$. This relation is the origin of the H\"older exponent $\frac12$.

For the sake of simplicity, we omit the time cutoffs, which will be explained below. Put $\theta=K^2t$ and choose
\[
  y_K(t)
  =-\frac{\eps}{2K^2}\bigl(\sin\theta-\cos\theta\bigr).
\]
Using
\[
  A_{0,H}p_K=(K^2+4)p_K-4Kc_K,
\]
we obtain
\[
  \partial_ty_K+K^2y_K=-\eps\sin\theta,
  \qquad
  \abs{y_K}\lesssim\eps K^{-2},
  \qquad
  \abs{\partial_ty_K}\lesssim\eps,
\]
and the exact cancellation
\[
  (\partial_t+A_{0,H})(y_Kp_K)
  +\eps L_{b_K(t,\cdot)}\phi
  =
  4y_Kp_K-2\eps K^{-1}\cos\theta\,c_K.
\]
The two bad $p_K$-terms of size $\eps$ cancel. The remaining
expression has $H$-norm bounded by $C\eps K^{-1}$. Thus the corrector itself
has size $O(\eps K^{-2})$, but its time derivative has size
$O(\eps)$.

We next place these one-frequency constructions on disjoint time intervals
$I_j$ of length $\ell_j$. Since the intervals are contained in a finite time
interval, $\sum_j\ell_j<\infty$.
Consequently, one bounded mode on each interval would contribute at most
\[
  C\eps^2\sum_j\ell_j<\infty
\]
to the squared $\Leb^2(0,1;H)$-norm of the time derivative. Increasing the
size of a single mode does not help to make this infinite, since the
$\C^{0,1/2}$-seminorm of the corresponding coefficient atom increases by
the same factor.

Instead, we activate a growing number of mutually orthogonal modes on
each block. For the construction we will later choose
\[
  \ell_j=\frac1{16j(j+1)},
  \qquad
  M_j=j,
\]
and define the frequencies
\[
  K_{j,m}=16^m\ell_j^{-1},
  \qquad
  1\le m\le M_j.
\]
The lacunarity of the frequencies implies that the diffusion coefficient is
uniformly $\frac12$-H\"older continuous in time with values in
$\Leb^\infty$, independently of the number $M_j$ of active modes; see
Proposition~\ref{prop:coefficient}.

Let $J_j$ be the middle third of $I_j$, where the cutoff equals one. On
this interval,
\[
  \partial_tu(t)
  =-\frac{\eps}{2}\sum_{m=1}^{M_j}
  \bigl(\cos\theta_{j,m}(t)+\sin\theta_{j,m}(t)\bigr)
  p_{K_{j,m}}.
\]
Using the orthogonality of the spatial modes and integrating in time, we
deduce
\[
  \int_{I_j}\norm{\partial_tu(t)}_H^2\dd t
  \gtrsim\eps^2M_j\ell_j
  =\frac{\eps^2}{16(j+1)}.
\]
Here the factor $M_j$ comes from spatial orthogonality, whereas the factor
$\ell_j$ comes from time integration over the block. Hence
\[
  \sum_jM_j\ell_j=\infty, \qquad \partial_tu\notin\Leb^2(0,1;H).
\]

It remains to control the forcing. The estimate
$\norm{p_K}_{V'}\lesssim K^{-1}$ and the lacunarity of $K_{j,m}$ make sure that the solution
is an element of the variational class. After the modewise cancellation, the remainders are controllable.
We prove that
\[
  f=\partial_tu+\mathcal A(\cdot)u\in\C([0,1];H).
\]

Finally, $\phi$, $p_K$, and their first derivatives vanish at both
endpoints. Hence zero extension introduces no issues at the boundary of the support.
Tensorisation and parabolic rescaling then yield the full-space and
bounded-domain counterexamples. Section~\ref{sec:endpoint-exclusions}
compares the resulting coefficient with several known sufficient endpoint
hypotheses from the literature.

\subsection*{Notation}
For nonnegative quantities, $X\lesssim Y$ means $X\le CY$ with a constant
$C$ independent of the parameters under consideration, and $X\simeq Y$
means both $X\lesssim Y$ and $Y\lesssim X$.

\subsection*{Acknowledgements}
Lukas Niebel is funded by SNSF Starting Grant
TMSGI2\textunderscore226018 and by the Deutsche Forschungsgemeinschaft (DFG,
German Research Foundation) under Germany's Excellence Strategy
EXC 2044/2--390685587, Mathematics M\"unster:
Dynamics--Geometry--Structure.

\subsection*{Declaration of AI Use}
During an exploratory analysis, a first counterexample was found by OpenAI's
GPT-5.5 Pro in two dimensions and on the full space. It was then verified and studied
by the author, who simplified it and reduced it to the one-dimensional
interval counterexample presented here. OpenAI's GPT-5.6 Sol was
used for drafting and revision of parts of the exposition.
All mathematical claims, calculations, and AI-generated suggestions were
critically reviewed and verified by the author, who takes full responsibility
for the mathematical content and the final manuscript.

\section{Spatial oscillatory modes}

Let us first introduce the spatial oscillatory modes and record some identities
used in the construction. Let
\[
  A_{0,H}=-\partial_x^2,
  \qquad
  D(A_{0,H})=\Sob^2(0,\pi)\cap\Sob_0^1(0,\pi),
\]
be the Dirichlet Laplacian on $H$. For
$\kappa\in\Leb^\infty(0,\pi)$, we write
\[
  L_\kappa v=-\partial_x(\kappa\partial_xv),
  \qquad L_\kappa:V\longrightarrow V'.
\]
We choose the profile
\begin{equation}
  \phi(x)=\sin^2x.
\end{equation}
For every integer $K>2$, we set
\begin{equation}
  p_K(x)=\sin(Kx)\partial_x\phi(x)=\sin(Kx)\sin(2x),
  \qquad
  c_K(x)=\cos(Kx)\cos(2x).
\end{equation}
The profile $\phi$ and the modes $p_K$, together with their first derivatives,
vanish at both endpoints:
\begin{equation}\label{eq:endpoint-conditions}
  \begin{aligned}
    \phi(0) & =\partial_x\phi(0)=\phi(\pi)=\partial_x\phi(\pi)=0, \\
    \qquad
    p_K(0)  & =\partial_xp_K(0)=p_K(\pi)=\partial_xp_K(\pi)=0.
  \end{aligned}
\end{equation}
The vanishing endpoint values of $\phi$ and $p_K$ will imply that the solution satisfies the Dirichlet
condition in the interval construction. The vanishing of their first
derivatives is not needed in this case. It will be used later for the zero extension to
the real line in Section~\ref{sec:transfer}.

\begin{lemma}[Oscillatory-mode identities]\label{lem:oscillatory-modes}
  There is an absolute constant $C>0$ such that, for every integer $K>2$,
  \begin{equation}\label{eq:oscillatory-mode-norms}
    \norm{p_K}_H^2=\norm{c_K}_H^2=\frac\pi4,
    \qquad
    \norm{\partial_xp_K}_H^2=\frac\pi4(K^2+4),
    \qquad
    \norm{p_K}_{V'}\le CK^{-1}.
  \end{equation}
  Moreover,
  \begin{equation}\label{eq:oscillatory-mode-laplacian}
    A_{0,H}p_K=(K^2+4)p_K-4Kc_K.
  \end{equation}
  If $L\ge16K$, then the two-dimensional spaces
  $\operatorname{span}\{p_K,c_K\}$ and
  $\operatorname{span}\{p_L,c_L\}$ are orthogonal in $H$.
\end{lemma}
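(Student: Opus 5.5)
The plan is to reduce everything to elementary trigonometry by expanding products into single cosines. First, the product-to-sum formulas give
\[
p_K=\tfrac12\bigl(\cos((K-2)x)-\cos((K+2)x)\bigr),\qquad c_K=\tfrac12\bigl(\cos((K-2)x)+\cos((K+2)x)\bigr).
\]
Since $K>2$, the frequencies $K\pm2$ are distinct positive integers. The functions $\cos(nx)$, $n\ge1$, are mutually orthogonal on $(0,\pi)$ with $\norm{\cos(n\cdot)}_H^2=\pi/2$. Hence
\[
\norm{p_K}_H^2=\norm{c_K}_H^2=\tfrac14\cdot2\cdot\tfrac\pi2=\tfrac\pi4.
\]

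For the Laplacian identity, I will differentiate $p_K=\sin(Kx)\sin(2x)$ twice directly:
\[
\partial_x^2p_K=-(K^2+4)\sin(Kx)\sin(2x)+4K\cos(Kx)\cos(2x).
\]
This gives \eqref{eq:oscillatory-mode-laplacian}, noting $p_K\in\Sob^2\cap\Sob_0^1$ by \eqref{eq:endpoint-conditions}. Then, since $p_K$ vanishes at the endpoints, integration by parts yields
\[
\norm{\partial_xp_K}_H^2=\langle A_{0,H}p_K,p_K\rangle_H=(K^2+4)\tfrac\pi4-4K\langle c_K,p_K\rangle_H.
\]
Moreover, $\langle c_K,p_K\rangle_H=\tfrac14\bigl(\norm{\cos((K-2)\cdot)}^2-\norm{\cos((K+2)\cdot)}^2\bigr)=0$. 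This gives the stated $\Sob^1$ norm.

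For the $V'$ bound, which is the only non-identity, I will write $p_K$ as a derivative of something small. Take
\[
P(x)=\tfrac12\Bigl(\tfrac{\sin((K-2)x)}{K-2}-\tfrac{\sin((K+2)x)}{K+2}\Bigr),
\]
so that $\partial_xP=p_K$. For $w\in V$, integration by parts (no boundary terms since $w$ vanishes at the endpoints) gives $\abs{\langle p_K,w\rangle}=\abs{\int P\,\overline{\partial_xw}}\le\norm{P}_H\norm{w}_V$. Since $\norm{P}_H\lesssim (K-2)^{-1}\le 3K^{-1}$ for integers $K\ge3$, the bound $\norm{p_K}_{V'}\le CK^{-1}$ follows.

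Finally, for orthogonality, $\operatorname{span}\{p_K,c_K\}=\operatorname{span}\{\cos((K-2)x),\cos((K+2)x)\}$, and similarly for $L$. If $L\ge16K$, then $L-2\ge16K-2>K+2$, so all four frequencies are distinct. The spaces are therefore orthogonal by orthogonality of the cosines.

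The main (mild) obstacle is the $V'$ estimate. Choosing an antiderivative $P$ with no boundary contribution handles it, since $w\in\Sob_0^1$ kills the boundary terms regardless of $P$'s values at the endpoints.
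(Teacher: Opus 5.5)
Your proposal is correct and follows the paper's proof essentially step for step: the same cosine expansion of $p_K$ and $c_K$, the same antiderivative $P_K$ for the $V'$ bound, direct differentiation for the Laplacian identity, and disjointness of the frequency sets $\{K\pm2\}$, $\{L\pm2\}$ for orthogonality. The only deviation is minor and equally valid. You obtain $\norm{\partial_xp_K}_H^2$ from the Laplacian identity via integration by parts and $\langle c_K,p_K\rangle_H=0$, whereas the paper differentiates the cosine expansion and uses orthogonality of the resulting sines.
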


\begin{proof}
  We write
  \[
    p_K=\frac12\bigl(\cos((K-2)x)-\cos((K+2)x)\bigr),
    \quad
    c_K=\frac12\bigl(\cos((K-2)x)+\cos((K+2)x)\bigr).
  \]
  Orthogonality of the cosine functions on $(0,\pi)$ gives the $H$-norms.
  Differentiating the formula for $p_K$ we deduce the formula for the derivative
  norm. To estimate the negative norm, we define
  \[
    P_K(x)=\frac12\left(
    \frac{\sin((K-2)x)}{K-2}
    -\frac{\sin((K+2)x)}{K+2}
    \right).
  \]
  Then $\partial_xP_K=p_K$, and therefore, for $w\in V$,
  \[
    \abs{\int_0^\pi p_K\overline w\dd x}
    =\abs{-\int_0^\pi P_K\overline{\partial_xw}\dd x}
    \le\norm{P_K}_H\norm{w}_V
    \le CK^{-1}\norm{w}_V.
  \]
  Next, differentiation of $\sin(Kx)\sin(2x)$ gives
  \eqref{eq:oscillatory-mode-laplacian}. Finally, if $L\ge16K$, the sets
  $\{K-2,K+2\}$ and $\{L-2,L+2\}$ are disjoint, so orthogonality proves
  the last assertion.
\end{proof}

We recall the following identity for the action of the coefficient
on the low-frequency profile:
\begin{equation}\label{eq:coefficient-profile-identity}
  L_{K^{-1}\cos(Kx)}\phi
  =p_K-2K^{-1}c_K.
\end{equation}
The first term is the leading oscillatory mode, whereas the second term has the small
factor $K^{-1}$.

\section{Time blocks, frequencies, and the coefficient}

We now define the time blocks and the lacunary frequencies, and then construct
the coefficient. For $j\ge1$, we set
\begin{equation}
  \tau_j=\frac5{16}-\frac1{16j},
  \qquad
  \ell_j=\tau_{j+1}-\tau_j=\frac1{16j(j+1)},
  \qquad
  M_j=j.
\end{equation}
We write
\[
  I_j=(\tau_j,\tau_{j+1}).
\]
The intervals $I_j$ are adjacent and accumulate from the left at $5/16$.
We choose $\eta\in\C_c^\infty(0,1)$ such that
\[
  0\le\eta\le1,
  \qquad
  \supp\eta\subset[1/10,9/10],
  \qquad
  \eta=1\quad\text{on }[1/3,2/3],
\]
and define
\[
  \eta_j(t)=\eta\left(\frac{t-\tau_j}{\ell_j}\right),
  \qquad
  J_j=\left[\tau_j+\frac{\ell_j}{3},
    \tau_j+\frac{2\ell_j}{3}\right].
\]
Then $\eta_j=1$ on $J_j$ and
$\norm{\partial_t\eta_j}_\infty\le C\ell_j^{-1}$.

For $1\le m\le M_j$, we choose the integer frequencies
\begin{equation}
  K_{j,m}=16^m\ell_j^{-1}=16^{m+1}j(j+1),
  \qquad
  \theta_{j,m}(t)=K_{j,m}^2(t-\tau_j).
\end{equation}
Within each block, consecutive frequencies differ by a factor of $16$. The following properties will be repeatedly used:
\begin{equation}\label{eq:reciprocal-sums}
  \begin{aligned}
    S_j:=\sum_{m=1}^{M_j}K_{j,m}^{-1}
              & \le \frac{\ell_j}{15},    \\
    \sum_{m=1}^{M_j}K_{j,m}^{-2}
              & \le \frac{\ell_j^2}{255}, \\
    M_j\ell_j & =\frac1{16(j+1)},         \\
    M_jS_j    & \le\frac1{240(j+1)}.
  \end{aligned}
\end{equation}
We also have
\begin{equation}\label{eq:oscillations}
  K_{j,1}^2\ell_j=4096j(j+1)\longrightarrow\infty.
\end{equation}

\begin{remark}[Choice of the frequencies]
  The factor $16$ in the definition of $K_{j,m}$ is only used for
  summability and for the separation of the oscillatory modes. After increasing the first frequency and changing the numerical
  constants, one may also use a dyadic family.
\end{remark}

We define
\begin{equation}\label{eq:beta-def}
  \beta(t,x)=\sum_{j=1}^\infty\sum_{m=1}^{M_j}
  \eta_j(t)K_{j,m}^{-1}\sin\theta_{j,m}(t)\cos(K_{j,m}x).
\end{equation}
Here $(t,x)\in\R^2$. At a fixed time, at most one block in this sum is active and that
block contains finitely many modes, so \eqref{eq:beta-def} is a pointwise finite sum.
For later use, we write $\beta_j$ for its $j$-th block, extended by zero
outside $I_j$ in time, so that $\beta=\sum_j\beta_j$.
Given $0<\eps\le1$, we set on $\R^2$
\begin{equation}\label{eq:fra-def}
  \fra(t,x)=1+\eps\beta(t,x).
\end{equation}
For the interval problem we use the restriction to
$[0,1]\times(0,\pi)$.

\begin{proposition}[Ellipticity and $\frac{1}{2}$-H\"older time regularity]
  \label{prop:coefficient}
  The coefficient $\fra$ is bounded, real-valued, and satisfies
  \[
    \norm{\fra-1}_{\Leb^\infty(\R^2)}\le C\eps.
  \]
  Moreover,
  \[
    \norm{\fra(t,\cdot)-\fra(s,\cdot)}_{\Leb^\infty(\R)}
    \le C\eps\abs{t-s}^{1/2},
    \qquad s,t\in\R.
  \]
  Here $C>0$ depends only on the fixed cutoff $\eta$.
  Choosing $\eps$ sufficiently small guarantees that $\fra$ is
  uniformly elliptic.
\end{proposition}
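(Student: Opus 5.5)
The plan is to first reduce to a single block, then prove a uniform bound for that block by splitting into short and long time differences, and finally sum across blocks.

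\textbf{Boundedness.} At each time at most one block $\beta_j$ is active, because the supports of the $\eta_j$ lie in the disjoint intervals $I_j$. Within that block, $\abs{\beta_j(t,x)}\le S_j\le \ell_j/15\le 1$ by \eqref{eq:reciprocal-sums}. Hence $\norm{\fra-1}_{\Leb^\infty}\le\eps$. Taking $\eps\le 1/2$ gives $\fra\ge 1/2$, which yields uniform ellipticity.

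\textbf{One block, short differences.} Fix a block $j$ and write $h=\abs{t-s}$. We bound $\norm{\beta_j(t)-\beta_j(s)}_{\Leb^\infty}$ by splitting the modes of the block at the threshold $K_{j,m}^2h\le1$. For each mode $m$, the time derivative of $\eta_j K^{-1}\sin\theta$ is bounded by $K^{-1}\norm{\eta_j'}_\infty+K\le C\ell_j^{-1}K^{-1}+K\le C'K$, using $K\ge\ell_j^{-1}$. So a low mode changes by at most $\min\{2K^{-1},C'Kh\}$. We sum $C'Kh$ over the modes with $K\le h^{-1/2}$. Because the frequencies grow geometrically, this sum is at most $C$ times the largest term, which is $\lesssim h^{1/2}$. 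We sum $2K^{-1}$ over the modes with $K>h^{-1/2}$. By the same geometric argument this is $\lesssim$ the largest term, again $\lesssim h^{1/2}$. This lacunarity step is what makes the constant independent of $M_j$.

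\textbf{One block, long differences.} If $h\ge\ell_j$, then the crude bound $2S_j\le 2\ell_j/15\le h^{1/2}$ suffices, since $\ell_j\le 1$. Together with the previous step, every block satisfies $\norm{\beta_j(t)-\beta_j(s)}_{\Leb^\infty}\le Ch^{1/2}$ uniformly in $j$.

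\textbf{Across blocks.} This is the main obstacle, because $t$ and $s$ may lie in different blocks and there are infinitely many blocks. Suppose $t\in I_j$, $s\in I_k$ with $j\ne k$, or that one of the two times lies outside all blocks. Each $\beta_j$ vanishes near the endpoints of $I_j$, since $\supp\eta_j\subset\tau_j+[\ell_j/10,9\ell_j/10]$. So we can pick a point $r$ between $s$ and $t$ lying at an endpoint of $I_j$, where $\beta=0$. Then
\[
\norm{\beta(t)-\beta(s)}\le\norm{\beta_j(t)-\beta_j(r)}+\norm{\beta_k(r')-\beta_k(s)},
\]
where $r'$ is the analogous endpoint of $I_k$ lying between $s$ and $t$. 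Each term is bounded by the single-block estimate, with time distances at most $h$. This gives $2Ch^{1/2}$.

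Finally, multiplying by $\eps$ gives both claimed estimates, and the constant depends only on $\norm{\eta'}_\infty$.
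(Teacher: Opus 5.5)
Your proof is correct and follows the paper's approach. You bound a single zero-extended block by splitting the lacunary modes at $K=h^{-1/2}$, use the crude $2S_j$ bound when $h\ge\ell_j$, and handle times in different blocks with a two-term decomposition. The differences are cosmetic: absorbing the cutoff derivative into a per-mode bound $C'K$ via $K\ge\ell_j^{-1}$ makes your short-difference estimate valid for all $h$. Your intermediate endpoints $r,r'$ are also unnecessary, since the zero-extended one-block estimate already gives $\beta(t)-\beta(s)=(\beta_j(t)-\beta_j(s))+(\beta_k(t)-\beta_k(s))$ directly, so the cross-block step is not really an obstacle.
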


\begin{proof}
  We first prove the uniform estimate. At each fixed time only one block
  contributes, and therefore
  \[
    \norm{\beta(t,\cdot)}_\infty
    \le\sup_jS_j\le\frac{\ell_1}{15}.
  \]
  Hence $\norm{\fra-1}_{\Leb^\infty(\R^2)}\le C\eps$, and $\fra$ is uniformly
  elliptic when $\eps$ is sufficiently small.
  Put $h=\abs{t-s}$.
  If $h\le\ell_j$, then, using
  $\norm{\partial_t\eta_j}_\infty\le C\ell_j^{-1}$ and
  \[
    K_{j,m}^{-1}
    \abs{\sin\theta_{j,m}(t)-\sin\theta_{j,m}(s)}
    \le
    \min\left\{\frac2{K_{j,m}},K_{j,m}h\right\},
  \]
  we obtain
  \[
    \norm{\beta_j(t)-\beta_j(s)}_\infty
    \le
    \frac{Ch}{\ell_j}S_j
    +\sum_{m=1}^{M_j}
    \min\left\{\frac2{K_{j,m}},K_{j,m}h\right\}.
  \]
  The first term is bounded by $Ch$ by
  \eqref{eq:reciprocal-sums}. To estimate the second one, the case $h=0$
  being clear, set $K_h=h^{-1/2}$ and split the sum according to whether
  $K_{j,m}\le K_h$ or $K_{j,m}>K_h$. Since
  $K_{j,m+1}=16K_{j,m}$, the corresponding geometric sums satisfy
  \[
    h\sum_{K_{j,m}\le K_h}K_{j,m}
    \le \frac{16}{15}hK_h
    =\frac{16}{15}h^{1/2},
    \qquad
    2\sum_{K_{j,m}>K_h}K_{j,m}^{-1}
    \le\frac{32}{15}K_h^{-1}
    =\frac{32}{15}h^{1/2},
  \]
  with the usual convention that an empty sum is zero. Consequently,
  \[
    \sum_{m=1}^{M_j}
    \min\left\{\frac2{K_{j,m}},K_{j,m}h\right\}
    \le Ch^{1/2}.
  \]
  Combining these estimates and using $h\le\ell_j\le1$, we conclude that
  \[
    \norm{\beta_j(t)-\beta_j(s)}_\infty
    \le Ch+Ch^{1/2}
    \le Ch^{1/2}.
  \]
  If $h>\ell_j$, then
  \[
    \norm{\beta_j(t)-\beta_j(s)}_\infty
    \le2S_j\le C\ell_j\le Ch^{1/2}.
  \]
  Thus every zero-extended block has a uniform $1/2$-H\"older bound.

  Suppose now that $t$ and $s$ belong to different blocks $j$ and $k$,
  respectively. Then
  \[
    \beta(t)-\beta(s)
    =\bigl(\beta_j(t)-\beta_j(s)\bigr)
    +\bigl(\beta_k(t)-\beta_k(s)\bigr).
  \]
  The two uniform one-block estimates give the desired global bound. The
  same argument, with one or both summands omitted, covers times at which
  $\beta$ vanishes. Multiplying by $\eps$ completes the proof.
\end{proof}

\begin{proposition}[Sharpness within the H\"older scale]
  \label{prop:holder-sharp}
  For every $1/2<\alpha<1$, the coefficient satisfies
  \[
    \fra\notin\C^{0,\alpha}([0,1];\Leb^\infty(0,\pi)).
  \]
\end{proposition}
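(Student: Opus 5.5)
To prove Proposition~\ref{prop:holder-sharp}, I will exhibit, for each block index $j$, a pair of times $s,t\in J_j$ at distance $h_j=|t-s|\to0$ with
\[
  \norm{\fra(t,\cdot)-\fra(s,\cdot)}_{\Leb^\infty(0,\pi)}\ge c\,\eps\, h_j^{1/2}
\]
for some $c>0$ independent of $j$. Then the ratio $\norm{\fra(t)-\fra(s)}_\infty/|t-s|^\alpha\ge c\eps h_j^{1/2-\alpha}\to\infty$ for every $\alpha>1/2$, so the Hölder seminorm is infinite. All blocks lie in $[0,5/16]\subset[0,1]$, so the times are admissible.

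\textbf{Choice of the times.} On $J_j$ the cutoff $\eta_j\equiv1$. I will work with the top mode $K:=K_{j,M_j}$. I take $s=\tau_j+\ell_j/2$, shifted by less than $2\pi K^{-2}$ so that $s\in J_j$ and $\theta_{j,M_j}(s)\equiv -\pi/2 \pmod{2\pi}$. I then set $t=s+\pi K^{-2}$, so that $\theta_{j,M_j}(t)\equiv\pi/2$ and $h_j=\pi K^{-2}$. Since $K^2\ell_j\to\infty$ by \eqref{eq:oscillations}, both times lie in $J_j$ for large $j$. The top mode then contributes $2K^{-1}\cos(Kx)$ to $\beta(t)-\beta(s)$, with $\Leb^\infty$ norm $2K^{-1}=(2/\sqrt\pi)h_j^{1/2}$.

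\textbf{Lower-order modes.} The modes $m<M_j$ have $K_{j,m}\le K/16^{M_j-m}$. For these I use the bound already from the proof of Proposition~\ref{prop:coefficient}:
\[
  K_{j,m}^{-1}\abs{\sin\theta_{j,m}(t)-\sin\theta_{j,m}(s)}\le K_{j,m}h_j .
\]
Summing the geometric series gives
\[
  h_j\sum_{m<M_j}K_{j,m}\le \tfrac{1}{15}Kh_j=\tfrac{\pi}{15}K^{-1}.
\]
There are no higher modes in the block.

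\textbf{Conclusion and main obstacle.} Evaluating at a point where $|\cos(Kx)|=1$, for instance $x=\pi/K\cdot$(an integer) with $x\in(0,\pi)$, the $\Leb^\infty$ difference is at least
\[
  (2-\pi/15)K^{-1}\ge K^{-1}\simeq h_j^{1/2}.
\]
Multiplying by $\eps$ gives the desired lower bound. The only delicate point is making sure the lower modes cannot cancel the top one. I handle this as above, via lacunarity (ratio $16$) and the Lipschitz bound on $\sin$, which loses only a fraction $\pi/15<2$ of the main term. Alternatively, one could take $M_j\ge1$ with the single top mode, which also works for $j=1$.
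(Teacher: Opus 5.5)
Your proof is correct, but it takes a different route from the paper's.

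The paper works with the lowest mode $K=K_{j,1}$. It picks $t_j=\tau_j+n_j\pi/K^2$ and $t_j+h_j$ in $J_j$ with $h_j=\pi/(2K^2)$, so that $\sin\theta_{j,1}$ moves from $0$ to $\pm1$. It then applies the Fourier-coefficient functional $\Lambda_K(\zeta)=\frac2\pi\int_0^\pi\zeta(x)\cos(Kx)\dd x$, which satisfies $\abs{\Lambda_K(\zeta)}\le2\norm{\zeta}_{\Leb^\infty(0,\pi)}$. By orthogonality of $\cos(K_{j,m}x)$ for distinct integer frequencies, $\Lambda_K$ annihilates every other mode exactly. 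This gives $\norm{\fra(t_j+h_j)-\fra(t_j)}_\infty\ge\eps/(2K)$ with no error terms to control.

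You instead take the top mode $K=K_{j,M_j}$ with $h_j=\pi K^{-2}$, so that $\sin\theta$ moves from $-1$ to $1$. You bound the lower modes uniformly in $x$ by the temporal Lipschitz estimate and the lacunary geometric sum, which is the same ingredient as in the upper bound of Proposition~\ref{prop:coefficient}. You then read off the $\Leb^\infty$ norm at $x=\pi/K$; this is legitimate because $\beta(t)-\beta(s)$ is a finite cosine sum, hence continuous in $x$.

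What each route buys:
\begin{itemize}
\item Your argument is elementary and pointwise. Your requirement $K^2\ell_j\ge18\pi$ in fact holds for every $j\ge1$, since $K_{j,1}^2\ell_j=4096j(j+1)$.
\item Your argument depends quantitatively on the lacunarity ratio. With ratio $q$ the lower modes contribute up to $\pi K^{-1}/(q-1)$, so you need $q>1+\pi/2$; the ratio $16$ leaves the margin $\pi/15<2$, but a dyadic family (as in the paper's remark) would break this particular estimate.
\item The paper's orthogonality argument uses only that the frequencies are distinct. It works equally well with any single mode of the block, not just the lowest.
\end{itemize}
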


\begin{proof}
  Let $K\ge1$ be an integer and consider the Fourier coefficient
  \[
    \Lambda_K(\zeta)=\frac2\pi\int_0^\pi \zeta(x)\cos(Kx)\dd x,
    \qquad
    \abs{\Lambda_K(\zeta)}\le2\norm{\zeta}_{\Leb^\infty(0,\pi)}.
  \]
  Put $K=K_{j,1}$ and $h_j=\pi/(2K^2)$. By
  \eqref{eq:oscillations},
  $K^2\abs{J_j}=K^2\ell_j/3\to\infty$. Hence, for all sufficiently large
  $j$, we can choose an integer $n_j$ such that
  \[
    t_j=\tau_j+\frac{n_j\pi}{K^2},
    \qquad t_j,\ t_j+h_j\in J_j.
  \]
  Thus
  $\eta_j(t_j)=\eta_j(t_j+h_j)=1$, and spatial orthogonality removes every
  mode except the first. Hence
  \[
    \abs{\Lambda_K\bigl(\fra(t_j+h_j)-\fra(t_j)\bigr)}
    =\frac{\eps}{K},
    \qquad
    \norm{\fra(t_j+h_j)-\fra(t_j)}_\infty\ge\frac{\eps}{2K}.
  \]
  The corresponding $\alpha$-H\"older quotient satisfies
  \[
    \frac{\norm{\fra(t_j+h_j)-\fra(t_j)}_\infty}{h_j^\alpha}
    \ge 2^{\alpha-1}\pi^{-\alpha}\eps K^{2\alpha-1},
  \]
  which tends to infinity. This rules out every H\"older exponent
  $1/2<\alpha<1$.
\end{proof}

\begin{proposition}[Common operator domain without uniform graph control]
  \label{prop:frozen-domains}
  Let
  \[
    D_0=D(A_{0,H})=\Sob^2(0,\pi)\cap\Sob_0^1(0,\pi),
  \]
  endowed with the graph norm of $A_{0,H}$. Then
  \[
    D(A_H(t))=D_0\qquad(t\in[0,1]),
  \]
  but
  \[
    \sup_{t\in[0,1]}
    \norm{A_H(t)}_{\mathcal L(D_0,H)}=\infty.
  \]
\end{proposition}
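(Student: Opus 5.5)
Fix $t\in[0,1]$ and write $\kappa=\fra(t,\cdot)=1+\eps\beta(t,\cdot)$. The plan has two parts: identify the domain using the spatial smoothness of $\kappa$ at each fixed time, and show the operator norms blow up by testing on the profile $\phi$ with the identity \eqref{eq:coefficient-profile-identity}.

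\emph{Domain identification.} At a fixed time at most one block $j$ is active, so $\kappa$ is a trigonometric polynomial in $x$ with finitely many frequencies $K_{j,m}$. Hence $\kappa\in\C^\infty([0,\pi])$ and $\lambda\le\kappa\le\Lambda$ by Proposition~\ref{prop:coefficient}.

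First take $v\in D_0$. Then $\kappa\partial_xv\in\Sob^1(0,\pi)$, and
\[
  \mathcal A(t)v=-\kappa\partial_x^2v-\partial_x\kappa\,\partial_xv\in H,
\]
so $v\in D(A_H(t))$.

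Conversely, take $v\in V$ with $g:=\mathcal A(t)v\in H$. Then $\partial_x(\kappa\partial_xv)=-g$ in the distributional sense, so $\kappa\partial_xv\in\Sob^1(0,\pi)$. Dividing by the smooth function $\kappa$, which is bounded below by $\lambda>0$, gives $\partial_xv\in\Sob^1(0,\pi)$. Hence $v\in\Sob^2\cap\Sob_0^1=D_0$. This proves $D(A_H(t))=D_0$ for every $t\in[0,1]$.

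For each fixed $t$, the formula above also gives
\[
  \norm{A_H(t)v}_H\le\bigl(\norm{\kappa}_\infty+\norm{\partial_x\kappa}_\infty\bigr)\norm{v}_{D_0},
\]
so $A_H(t)\in\mathcal L(D_0,H)$. Here the graph norm controls $\norm{\partial_xv}_\infty$, since $\norm{\partial_x v}_\infty\lesssim\norm{v}_{\Sob^2}\lesssim\norm{v}_{D_0}$.

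\emph{Unboundedness.} Test on the fixed function $\phi=\sin^2x$. Since $\phi(0)=\phi(\pi)=0$ and $\phi$ is smooth, $\phi\in D_0$, and $\norm{\phi}_{D_0}$ is an absolute constant. Choose $t\in J_j$, so that $\eta_j(t)=1$. Then
\[
  A_H(t)\phi=A_{0,H}\phi+\eps\sum_{m=1}^{M_j}\sin\theta_{j,m}(t)\,L_{K_{j,m}^{-1}\cos(K_{j,m}x)}\phi .
\]
By \eqref{eq:coefficient-profile-identity}, this equals
\[
  A_{0,H}\phi+\eps\sum_{m=1}^{M_j}\sin\theta_{j,m}(t)\Bigl(p_{K_{j,m}}-2K_{j,m}^{-1}c_{K_{j,m}}\Bigr).
\]

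We now use the orthogonality in Lemma~\ref{lem:oscillatory-modes}. The spans $\operatorname{span}\{p_{K_{j,m}},c_{K_{j,m}}\}$ for different $m$ are mutually orthogonal, since consecutive frequencies differ by the factor $16$. The term $A_{0,H}\phi=-2\cos 2x$ involves only the frequency $2$. The $p_K$ and $c_K$ contain only the frequencies $K\pm2\ge 254$, so $A_{0,H}\phi$ is orthogonal to all of them.

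Projecting onto $p_{K_{j,m}}$ therefore gives, with absolute constants,
\[
  \norm{A_H(t)\phi}_H^2\ge \eps^2\sum_{m=1}^{M_j}\sin^2\theta_{j,m}(t)\,\frac\pi4\,\bigl(1-O(K_{j,m}^{-1})\bigr).
\]
Here the $O(K_{j,m}^{-1})$ accounts for the cross term between $p_K$ and $c_K$. These are not orthogonal in general: $\langle p_K,c_K\rangle=\tfrac12\bigl(\norm{\cos((K-2)\cdot)}^2-\norm{\cos((K+2)\cdot)}^2\bigr)=0$ on $(0,\pi)$ for integer $K>2$. Even without exact orthogonality, that cross term is only $O(K^{-1})$.

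\emph{Main obstacle: choosing a good time.} It remains to pick $t\in J_j$ at which $\sum_m\sin^2\theta_{j,m}(t)\gtrsim M_j$. I would avoid this pointwise choice by averaging over $J_j$ instead. Since $K_{j,m}^2\abs{J_j}\to\infty$ by \eqref{eq:oscillations}, we have
\[
  \frac1{\abs{J_j}}\int_{J_j}\sin^2\theta_{j,m}\dd t\ge\frac14
\]
for all $m$ once $j$ is large. Hence some $t_j\in J_j$ satisfies $\sum_m\sin^2\theta_{j,m}(t_j)\ge M_j/4$. This gives
\[
  \norm{A_H(t_j)}_{\mathcal L(D_0,H)}\ge\frac{\norm{A_H(t_j)\phi}_H}{\norm{\phi}_{D_0}}\gtrsim\eps\sqrt{M_j}=\eps\sqrt j\to\infty .
\]
Note that testing on $\phi$ keeps the denominator bounded, while a single test function $p_K$ would give the same blow-up through $\partial_x\kappa\sim\eps M_j$ only if $\norm{p_K}_{D_0}\sim K^2$ could be beaten, which it cannot. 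This is why $\phi$ is the right choice.
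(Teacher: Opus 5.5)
Your proof is correct and follows essentially the same route as the paper. For the domain identity you use smoothness and ellipticity of the frozen coefficient. For the blow-up you test on $\phi$, apply the coefficient--profile identity and modal orthogonality, and average $\sin^2\theta_{j,m}$ over $J_j$ to find $t_j$ with $\norm{A_H(t_j)\phi}_H\gtrsim\eps\sqrt{j}$.

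The differences are cosmetic. You use orthogonality of $A_{0,H}\phi=-2\cos 2x$ to the oscillatory modes, where the paper uses the reverse triangle inequality. Also, your hedge that $p_K$ and $c_K$ are "not orthogonal in general" contradicts your own computation, which shows $\langle p_K,c_K\rangle=0$ exactly (the prefactor is $\tfrac14$, not $\tfrac12$), so the $O(K^{-1})$ correction is unnecessary.
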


\begin{proof}
  Fix $t$ and write $\fra_t=\fra(t,\cdot)$. Since only finitely many
  spatial modes are active at each time and $\fra$ is uniformly elliptic,
  \[
    \fra_t,\fra_t^{-1}\in W^{1,\infty}(0,\pi).
  \]
  The product rule gives $D_0\subseteq D(A_H(t))$ and, for $v\in D_0$,
  \[
    \norm{A_H(t)v}_H
    \le\norm{\fra_t}_\infty\norm{\partial_x^2v}_H
    +\norm{\partial_x\fra_t}_\infty\norm{\partial_xv}_H
    \le C_t\norm{v}_{D_0}.
  \]
  Thus $A_H(t)\in\mathcal L(D_0,H)$. Conversely, if
  $v\in D(A_H(t))$, then $q=\fra_t\partial_xv$ satisfies
  \[
    q\in H,
    \qquad
    \partial_xq=-A_H(t)v\in H.
  \]
  Hence $q\in\Sob^1(0,\pi)$, so
  $\partial_xv=\fra_t^{-1}q\in\Sob^1(0,\pi)$, proving
  $v\in D_0$.

  To see that the control is not uniform, take $\phi(x)=\sin^2x$. For
  $t\in J_j$, the construction gives
  \[
    L_{\beta(t,\cdot)}\phi
    =
    \sum_{m=1}^{M_j}\sin\theta_{j,m}(t)
    \bigl(p_{K_{j,m}}-2K_{j,m}^{-1}c_{K_{j,m}}\bigr).
  \]
  Lemma~\ref{lem:oscillatory-modes} gives orthogonality between distinct
  summands, and
  \[
    \norm{p_K-2K^{-1}c_K}_H^2
    =\frac\pi4\bigl(1+4K^{-2}\bigr)\ge\frac\pi4.
  \]
  Moreover, $\abs{J_j}=\ell_j/3$ and
  $K_{j,m}^2\ell_j\ge6$, so
  \[
    \frac1{\abs{J_j}}\int_{J_j}
    \sin^2\theta_{j,m}(t)\dd t
    \ge\frac12-\frac1{2K_{j,m}^2\abs{J_j}}
    \ge\frac14.
  \]
  Averaging over $J_j$ therefore yields some $t_j\in J_j$ such that
  \[
    \norm{L_{\beta(t_j,\cdot)}\phi}_H^2
    \ge \frac{\pi}{16}M_j.
  \]
  Since $M_j=j$ and
  \[
    A_H(t_j)\phi
    =A_{0,H}\phi+\eps L_{\beta(t_j,\cdot)}\phi,
  \]
  we obtain
  \[
    \norm{A_H(t_j)\phi}_H
    \ge \frac{\eps\sqrt\pi}{4}\sqrt j
    -\norm{A_{0,H}\phi}_H
    \longrightarrow\infty.
  \]
  Since $\norm{\phi}_{D_0}$ is fixed, it follows that
  $\norm{A_H(t_j)}_{\mathcal L(D_0,H)}\to\infty$.
\end{proof}

\section{The variational solution and its time derivative}

Next, we construct the variational solution and isolate the divergent
part of its time derivative. We choose $\rho\in\C_c^\infty(0,1)$ such that
\begin{equation}
  \rho=1\quad\text{on }[1/4,1/2].
\end{equation}
In particular, $\rho(0)=0$, and $\rho=1$ on a neighbourhood of the closure of
$\bigcup_j\supp\eta_j$.

For each mode, we define the real-valued scalar function
\begin{equation}\label{eq:y-def}
  y_{j,m}(t)
  =-\frac{\eps}{2K_{j,m}^2}\eta_j(t)
  \bigl(\sin\theta_{j,m}(t)-\cos\theta_{j,m}(t)\bigr).
\end{equation}
Differentiation gives
\begin{equation}\label{eq:y-ode}
  \partial_ty_{j,m}+K_{j,m}^2y_{j,m}
  =-\eps\eta_j\sin\theta_{j,m}+r_{j,m},
\end{equation}
where
\begin{equation}\label{eq:r-def}
  r_{j,m}
  =-\frac{\eps}{2K_{j,m}^2}\partial_t\eta_j
  (\sin\theta_{j,m}-\cos\theta_{j,m}),
  \qquad
  \abs{r_{j,m}}\le C\eps K_{j,m}^{-2}\ell_j^{-1}.
\end{equation}
We then set
\begin{equation}\label{eq:u-def}
  v(t,x)=\sum_{j=1}^\infty\sum_{m=1}^{M_j}
  y_{j,m}(t)p_{K_{j,m}}(x),
  \qquad
  u(t,x)=\rho(t)\phi(x)+v(t,x).
\end{equation}
Again, the sum is finite at each time.

\begin{lemma}[Regularity and failure of $\Leb^2$-integrability of the time derivative]
  \label{lem:solution-regularity}
  The function $u$ satisfies
  \[
    u\in\C([0,1];V)\cap\C^1([0,1];V')
    \cap W^{1,1}(0,1;H),
    \qquad u(0)=0,
  \]
  and
  \[
    \partial_tu\notin\Leb^2(0,1;H).
  \]
\end{lemma}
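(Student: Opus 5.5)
We treat the smooth part $\rho\phi$ and the oscillatory part $v$ separately. Since $\rho\in\C_c^\infty(0,1)$ and $\phi\in V$, the term $\rho\phi$ lies in $\C^1([0,1];V)$ and vanishes at $t=0$. Because $\rho(0)=0$ and all $\eta_j$ are supported in $(5/16-1/16,5/16)\subset(0,1)$, also $v(0)=0$, so $u(0)=0$. On each block the sum defining $v$ is finite and smooth in time with values in $V$. The only issue is therefore uniform control as $j\to\infty$, that is, as $t\uparrow 5/16$. We plan to prove that the block contributions to the relevant norms tend to zero, which gives continuity at the accumulation point $5/16$; at every other time continuity is clear.

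\emph{Step 1: $v\in\C([0,1];V)$.} For $t\in I_j$, the orthogonality in Lemma~\ref{lem:oscillatory-modes} (valid since $K_{j,m+1}=16K_{j,m}$; orthogonality of the derivatives follows similarly from the cosine/sine expansions) together with $\norm{\partial_xp_K}_H^2\simeq K^2$ and $\abs{y_{j,m}}\le C\eps K_{j,m}^{-2}$ gives
\[
\norm{v(t)}_V^2\lesssim\eps^2\sum_m K_{j,m}^{-2}\le C\eps^2\ell_j^2\to0
\]
by \eqref{eq:reciprocal-sums}. Hence $v(t)\to0$ in $V$ as $t\uparrow5/16$, and $v$ is continuous.

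\emph{Step 2: $v\in\C^1([0,1];V')$.} We differentiate mode by mode. From \eqref{eq:y-ode}, $\abs{\partial_ty_{j,m}}\le C\eps(1+K_{j,m}^{-2}\ell_j^{-1})\le C\eps$. The estimate $\norm{p_K}_{V'}\le CK^{-1}$ then gives, without using orthogonality, the triangle-inequality bound
\[
\norm{\partial_tv(t)}_{V'}\le C\eps S_j\le C\eps\ell_j\to0.
\]
So $\partial_tv$ is continuous into $V'$ and tends to zero at $5/16$. Since $v$ also tends to zero there, a short argument via the mean-value inequality shows that $v$ is differentiable at $5/16$ with derivative $0$.

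\emph{Step 3: $W^{1,1}(0,1;H)$ and the failure of $\Leb^2$.} In $H$ we have $\norm{\partial_tv(t)}_H^2\lesssim\eps^2M_j$ on $I_j$, again by orthogonality. Therefore
\[
\int_{I_j}\norm{\partial_tv}_H\dd t\lesssim\eps\sqrt{M_j}\,\ell_j\simeq\eps j^{-3/2},
\]
which is summable; combined with the previous steps this gives $u\in W^{1,1}(0,1;H)$, the weak derivative being identified with the $V'$-derivative. For the lower bound, on $J_j$ we have $\eta_j=1$, so
\[
\partial_tv=-\tfrac\eps2\sum_m(\cos\theta_{j,m}+\sin\theta_{j,m})p_{K_{j,m}},
\]
and $\rho'=0$ there. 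Orthogonality and $\norm{p_K}_H^2=\pi/4$ give
\[
\int_{J_j}\norm{\partial_tu}_H^2\dd t=\frac{\pi\eps^2}{16}\sum_m\int_{J_j}\bigl(1+\sin2\theta_{j,m}\bigr)\dd t\ge c\,\eps^2M_j\ell_j,
\]
where the oscillatory integral is at most $K_{j,m}^{-2}\le\ell_j/6$ by \eqref{eq:oscillations}. Summing over $j$ yields $\sum_j(j+1)^{-1}=\infty$.

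The main obstacle is Step 2 together with the identification of the derivatives. The $V'$ bound must use the triangle inequality with $S_j$ rather than orthogonality, and differentiability at the accumulation point $5/16$ has to be checked directly. Everything else reduces to the norm identities of Lemma~\ref{lem:oscillatory-modes} and the sums in \eqref{eq:reciprocal-sums}.
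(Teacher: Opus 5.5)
Your proposal is correct and follows essentially the same route as the paper: blockwise bounds $\norm{v(t)}_V+\norm{\partial_tv(t)}_{V'}\lesssim\eps\ell_j$ on $I_j$, orthogonality in $H$ giving the summable bounds $\norm{\partial_tv}_{\Leb^1(I_j;H)}\lesssim\eps\sqrt{M_j}\,\ell_j$, and the lower bound $\int_{J_j}\norm{\partial_tu}_H^2\dd t\ge\frac{\pi\eps^2}{96}M_j\ell_j$ on the middle thirds. The differences are minor: you check differentiability at the accumulation point $5/16$ directly via the mean-value inequality, where the paper applies the uniform-convergence-of-derivatives theorem to the partial sums, and you bound $\norm{v(t)}_V$ using orthogonality of the $\partial_xp_K$ rather than the triangle inequality.
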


\begin{proof}
  Let us first prove the stated regularity. Write
  $v_j=\sum_{m=1}^{M_j}y_{j,m}p_{K_{j,m}}$, with each $v_j$
  extended by zero outside $I_j$. Equations \eqref{eq:y-def} and
  \eqref{eq:r-def}, together with $K_{j,m}^2\ell_j\ge1$, give the pointwise
  estimates
  \[
    \abs{y_{j,m}(t)}\le C\eps K_{j,m}^{-2},
    \qquad
    \abs{\partial_ty_{j,m}(t)}\le C\eps.
  \]
  Hence Lemma~\ref{lem:oscillatory-modes} and \eqref{eq:reciprocal-sums} imply
  \begin{equation}\label{eq:block-vvprime-pointwise}
    \norm{v_j}_{\C(\R;V)}
    +\norm{\partial_tv_j}_{\C(\R;V')}
    \le C\eps\sum_{m=1}^{M_j}K_{j,m}^{-1}
    \le C\eps\ell_j.
  \end{equation}
  Since $\sum_j\ell_j=1/16$, estimate
  \eqref{eq:block-vvprime-pointwise} shows that the series $\sum_jv_j$ and
  $\sum_j\partial_tv_j$ converge uniformly in $V$ and $V'$, respectively.
  Each zero-extended block is smooth in time. Applying
  \cite[Chap.~V, \S~2, Thm.~2.8]{AmannEscher2005}
  to the partial sums, viewed as $V'$-valued maps, gives
  \[
    \partial_tv=\sum_{j=1}^\infty \partial_tv_j
    \quad\text{in }\C(\R;V').
  \]
  Together with the uniform convergence of $\sum_jv_j$ in $V$, this yields
  \[
    v\in\C([0,1];V)\cap\C^1([0,1];V').
  \]
  The same is true for the fixed term $\rho\phi$. Both terms vanish at $t=0$.

  We next prove the additional time regularity in $H$. Orthogonality of the
  oscillatory modes, the bound $\abs{\partial_ty_{j,m}}\le C\eps$, and
  Lemma~\ref{lem:oscillatory-modes} give
  \[
    \norm{\partial_tv_j(t)}_H^2
    =\frac\pi4\sum_{m=1}^{M_j}\abs{\partial_ty_{j,m}(t)}^2
    \le C\eps^2M_j.
  \]
  Consequently,
  \[
    \norm{\partial_tv_j}_{\Leb^1(I_j;H)}
    \le C\eps\ell_j\sqrt{M_j}.
  \]
  Moreover,
  \[
    \sum_{j=1}^\infty\ell_j\sqrt{M_j}
    =\frac1{16}\sum_{j=1}^\infty
    \frac1{\sqrt j\,(j+1)}<\infty.
  \]
  Thus $\sum_j\partial_tv_j$ converges in $\Leb^1(0,1;H)$. Since its sum is already
  the distributional time derivative of $v$, it follows that
  $v\in W^{1,1}(0,1;H)$. The smooth background term has the same regularity,
  and hence $u\in W^{1,1}(0,1;H)$.

  It remains to prove that the time derivative is not square integrable. On $J_j$ we have
  $\partial_t\rho=0$, $\eta_j=1$, and $\partial_t\eta_j=0$, and thus
  \begin{equation}\label{eq:partial-t-u-on-J}
    \partial_tu(t)
    =-\frac\eps2\sum_{m=1}^{M_j}
    \bigl(\cos\theta_{j,m}(t)+\sin\theta_{j,m}(t)\bigr)
    p_{K_{j,m}}.
  \end{equation}
  In particular, \eqref{eq:partial-t-u-on-J} represents $\partial_tu(t)$ as an element of
  $H$ for every $t\in J_j$. The oscillatory modes are mutually orthogonal by
  Lemma~\ref{lem:oscillatory-modes}. Moreover,
  \[
    \int_{J_j}(\cos\theta_{j,m}+\sin\theta_{j,m})^2\dd t
    \ge\abs{J_j}-K_{j,m}^{-2}
    =\frac{\ell_j}{3}-K_{j,m}^{-2}
    \ge\frac{\ell_j}{6}.
  \]
  Here the last inequality follows from
  $K_{j,m}^2\ell_j\ge K_{j,1}^2\ell_j\ge6$.
  Since $\norm{p_K}_H^2=\pi/4$, we conclude
  \[
    \int_0^1\norm{\partial_tu(t)}_H^2\dd t
    \ge\frac{\pi\eps^2}{96}\sum_{j=1}^\infty M_j\ell_j
    =\frac{\pi\eps^2}{1536}\sum_{j=1}^\infty\frac1{j+1}
    =\infty.
  \]
\end{proof}

\section{Cancellation and the forcing}

We now exploit the oscillatory-mode cancellation to show that the forcing is
continuous with values in $H$. For the coefficient \eqref{eq:fra-def}, the
associated form operator has the form
\[
  \mathcal A(t)=L_{\fra(t,\cdot)}=L_1+\eps L_{\beta(t,\cdot)}.
\]
The choice of $y_{j,m}$ in \eqref{eq:y-def} makes the leading
$p_K$-terms cancel mode by mode. For a fixed pair $j,m$, write
$K=K_{j,m}$. Since $p_K$ is independent of time,
\eqref{eq:oscillatory-mode-laplacian} and \eqref{eq:y-ode} give
\begin{align*}
  (\partial_t+A_{0,H})(y_{j,m}p_K)
   & =(\partial_ty_{j,m})p_K
  +y_{j,m}\bigl((K^2+4)p_K-4Kc_K\bigr)           \\
   & =(\partial_ty_{j,m}+K^2y_{j,m}+4y_{j,m})p_K
  -4Ky_{j,m}c_K                                  \\
   & =\bigl(-\eps\eta_j\sin\theta_{j,m}
  +r_{j,m}+4y_{j,m}\bigr)p_K
  -4Ky_{j,m}c_K.
\end{align*}
The coefficient--profile term has the opposite leading $p_K$-part.
Indeed, $\rho\eta_j=\eta_j$, since $\rho=1$ on $\supp\eta_j$, and
\eqref{eq:coefficient-profile-identity} yields
\begin{align*}
   & \eps L_{\eta_jK^{-1}\sin\theta_{j,m}\cos(Kx)}(\rho\phi) \\
   & \qquad={}
  \eps\eta_j\sin\theta_{j,m}
  L_{K^{-1}\cos(Kx)}\phi                                     \\
   & \qquad={}
  \eps\eta_j\sin\theta_{j,m}p_K
  -2\eps\eta_jK^{-1}\sin\theta_{j,m}c_K.
\end{align*}
Adding these identities, the two leading $p_K$-terms cancel, and we obtain
\begin{align*}
   & (\partial_t+A_{0,H})(y_{j,m}p_K)
  +\eps L_{\eta_jK^{-1}\sin\theta_{j,m}\cos(Kx)}(\rho\phi) \\
   & \qquad={}
  (r_{j,m}+4y_{j,m})p_K
  +\bigl(-4Ky_{j,m}
  -2\eps\eta_jK^{-1}\sin\theta_{j,m}\bigr)c_K.
\end{align*}
Finally, \eqref{eq:y-def} gives
\begin{align*}
  -4Ky_{j,m}
  -2\eps\eta_jK^{-1}\sin\theta_{j,m}
   & =2\eps\eta_jK^{-1}
  \bigl(\sin\theta_{j,m}-\cos\theta_{j,m}\bigr)
  -2\eps\eta_jK^{-1}\sin\theta_{j,m}      \\
   & =-2\eps\eta_jK^{-1}\cos\theta_{j,m}.
\end{align*}
We collect the remaining terms in
\begin{equation}\label{eq:E-def}
  E_{j,m}
  =(r_{j,m}+4y_{j,m})p_K
  -2\eps\eta_jK^{-1}\cos\theta_{j,m}\,c_K.
\end{equation}
We have proven
\begin{equation}\label{eq:exact-mode-cancellation}
  (\partial_t+A_{0,H})(y_{j,m}p_K)
  +\eps L_{\eta_jK^{-1}\sin\theta_{j,m}\cos(Kx)}(\rho\phi)
  =E_{j,m}.
\end{equation}

We set, for $t\in[0,1]$,
\[
  R(t)=\sum_{j=1}^\infty\sum_{m=1}^{M_j}E_{j,m}(t),
  \qquad
  f_0(t)=(\partial_t\rho)(t)\phi+\rho(t)A_{0,H}\phi,
\]
and define, initially as a continuous $V'$-valued function,
\begin{equation}\label{eq:f-def}
  f(t):=\partial_tu(t)+\mathcal A(t)u(t),
  \qquad t\in[0,1].
\end{equation}
This is well-defined because Lemma~\ref{lem:solution-regularity} gives
$u\in\C([0,1];V)$ and $\partial_tu\in\C([0,1];V')$, while
Proposition~\ref{prop:coefficient} and the form estimate give
$\mathcal A\in\C([0,1];\mathcal L(V,V'))$. Hence
$t\mapsto\mathcal A(t)u(t)$ is continuous
with values in $V'$. Summing
\eqref{eq:exact-mode-cancellation} yields
\begin{equation}\label{eq:f-decomposition}
  f(t)=f_0(t)+R(t)+\eps L_{\beta(t,\cdot)}v(t),
  \qquad t\in[0,1].
\end{equation}

\begin{proposition}[Continuous $H$-valued forcing]\label{prop:forcing}
  The function $f$ defined by \eqref{eq:f-def} belongs to
  $\C([0,1];H)$ and satisfies
  \[
    \partial_tu(t)+\mathcal A(t)u(t)=f(t)
  \]
  in $V'$ for every $t\in[0,1]$.
\end{proposition}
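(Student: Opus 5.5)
The plan is to work from the decomposition \eqref{eq:f-decomposition}, $f=f_0+R+\eps L_{\beta(t,\cdot)}v$, and show that each of the three pieces lies in $\C([0,1];H)$. The identity $\partial_tu+\mathcal A(t)u=f$ in $V'$ holds by definition \eqref{eq:f-def}, so only the $H$-valued continuity needs proof. The term $f_0=(\partial_t\rho)\phi+\rho A_{0,H}\phi$ is smooth in time with fixed values $\phi, A_{0,H}\phi=-2\cos(2x)\in H$, so it is trivially continuous. For the other two, each block $j$ is a finite sum of smooth-in-time $H$-valued functions supported in $\overline{I_j}$. The strategy is therefore to prove a uniform bound $\sup_t\|(\text{block } j)(t)\|_H\le \omega_j$ with $\omega_j\to0$. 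Since the blocks have disjoint time supports accumulating only at $5/16$, the sum is then continuous everywhere: away from $5/16$ only finitely many blocks matter, and at $5/16$ the value is $0$ and the nearby values are bounded by $\sup_{k\ge j}\omega_k\to 0$.

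For $R$, I use \eqref{eq:E-def} together with orthogonality from Lemma~\ref{lem:oscillatory-modes}:
\[
\Bigl\|\sum_m E_{j,m}\Bigr\|_H^2\lesssim\sum_m\bigl(|r_{j,m}|^2+|y_{j,m}|^2+\eps^2K_{j,m}^{-2}\bigr).
\]
The three contributions are bounded as follows:
\begin{itemize}
\item $|r_{j,m}|\lesssim\eps K_{j,m}^{-2}\ell_j^{-1}\le \eps K_{j,m}^{-1}$, since $K_{j,m}\ge \ell_j^{-1}$;
\item $|y_{j,m}|\lesssim \eps K_{j,m}^{-2}$;
\item the $c_K$-term is $\lesssim\eps K_{j,m}^{-1}$.
\end{itemize}
By \eqref{eq:reciprocal-sums} the sum is therefore $\lesssim\eps^2\ell_j^2$, so $\omega_j\lesssim \eps\ell_j\to0$.

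The main obstacle is the cross term $\eps L_{\beta(t,\cdot)}v(t)$. Within block $j$ it equals $\sum_{m,n}\eps\,\eta_j K_{j,m}^{-1}\sin\theta_{j,m}\,y_{j,n}\,L_{\cos(K_{j,m}x)}p_{K_{j,n}}$. I expand
\[
L_{\kappa}w=-\kappa\,\partial_x^2 w-(\partial_x\kappa)(\partial_x w),
\]
using $\|\partial_x p_K\|_\infty\lesssim K$ and $\|\partial_x^2p_K\|_\infty\lesssim K^2$. The $(m,n)$ term is then bounded in $L^\infty$, hence in $H$, by
\[
\eps\,K_m^{-1}\cdot\eps K_n^{-2}\cdot\bigl(K_n^2+K_mK_n\bigr)\lesssim\eps^2\bigl(K_m^{-1}+K_n^{-1}\bigr).
\]
Summing crudely over the $M_j^2$ pairs gives $\lesssim\eps^2M_jS_j\le C\eps^2/(j+1)$ by \eqref{eq:reciprocal-sums}. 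This is exactly where the crude estimate needs the lacunary bound $M_jS_j\to0$, and it already yields $\omega_j\to0$ without any orthogonality, so no finer high--low interaction analysis should be needed. Finally, I would verify that each block term is a genuine $H$-valued smooth function. This holds because for fixed $t$ it is a finite trigonometric expression, so $u(t)\in \Sob^2$ and the $V'$-identity is realized in $H$. Together with the uniform-in-$t$ decay in $j$, this gives $f\in\C([0,1];H)$.
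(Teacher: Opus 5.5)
Your proof is correct and follows the paper's argument. You split $f$ via \eqref{eq:f-decomposition}, bound the block-$j$ remainder by $C\eps\ell_j$ and the cross term by $C\eps^2M_jS_j\le C\eps^2/(j+1)$, and conclude continuity from the disjoint time supports together with the decay of these uniform bounds. The differences are cosmetic and give the same estimates: for $R$ you use orthogonality of the modes where the paper uses the triangle inequality, and for the cross term you expand into the $(m,n)$ double sum instead of applying the product rule to $\beta$ and $v$ as a whole.
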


\begin{proof}
  We first treat the remainder. Let $R_j=\sum_mE_{j,m}$. From
  \eqref{eq:E-def}, \eqref{eq:r-def}, and \eqref{eq:reciprocal-sums},
  \begin{align*}
    \norm{R_j(t)}_H
     & \le C\eps\left(
    \ell_j^{-1}\sum_mK_{j,m}^{-2}
    +\sum_mK_{j,m}^{-2}+S_j\right) \\
     & \le C\eps\ell_j.
  \end{align*}
  Each zero-extended $R_j$ is a continuous, compactly supported $H$-valued
  function on $I_j$. The supports are disjoint and $\ell_j\to0$, so the
  series defining $R$ converges uniformly in $H$. Thus
  \begin{equation}
    R\in\C([0,1];H).
  \end{equation}

  Next, we estimate $\eps L_{\beta(t,\cdot)}v(t)$. If $t\in I_j$, the sums contain
  only the $M_j$ modes of that block, and
  \[
    \norm{\beta(t)}_\infty\le S_j,
    \qquad
    \norm{\partial_x\beta(t)}_\infty\le M_j,
  \]
  while Lemma~\ref{lem:oscillatory-modes} and \eqref{eq:y-def} give
  \[
    \norm{\partial_xv(t)}_H\le C\eps S_j,
    \qquad
    \norm{\partial_x^2v(t)}_H\le C\eps M_j.
  \]
  In the second estimate we used
  $\norm{\partial_x^2p_K}_H\le CK^2$, which follows immediately from
  \eqref{eq:oscillatory-mode-laplacian} and \eqref{eq:oscillatory-mode-norms}.
  The ordinary product rule therefore yields
  \begin{align*}
    \norm{\eps L_{\beta(t,\cdot)}v(t)}_H
     & \le\eps\norm{\partial_x\beta(t)}_\infty
    \norm{\partial_xv(t)}_H
    +\eps\norm{\beta(t)}_\infty
    \norm{\partial_x^2v(t)}_H                  \\
     & \le C\eps^2M_jS_j
    \le\frac{C\eps^2}{j+1}.
  \end{align*}
  For each $j$, the zero extension of
  $t\mapsto\eps L_{\beta_j(t,\cdot)}v_j(t)$ is continuous and compactly supported in $I_j$.
  Since these supports are disjoint,
  \[
    \sup_{t\in[0,1]}
    \norm{\sum_{j>N}\eps L_{\beta_j(t,\cdot)}v_j(t)}_H
    \le\sup_{j>N}\frac{C\eps^2}{j+1}\longrightarrow0.
  \]
  Thus their sum belongs to $\C([0,1];H)$. Finally,
  $f_0\in\C([0,1];H)$, and hence
  \eqref{eq:f-decomposition} proves the proposition.
\end{proof}

\begin{proof}[Proof of Theorem~\ref{thm:main}]
  Proposition~\ref{prop:coefficient} gives
  \[
    \norm{\fra-1}_{\Leb^\infty((0,1)\times(0,\pi))}\le C\eps,
    \qquad
    [\fra]_{\C^{0,1/2}([0,1];\Leb^\infty(0,\pi))}\le C\eps.
  \]
  Hence
  \[
    \norm{\fra-1}_{\C^{0,1/2}([0,1];\Leb^\infty(0,\pi))}
    \le C\eps.
  \]
  Choosing $\eps>0$ sufficiently small gives both uniform ellipticity and
  the prescribed bound $\delta$ in the full $\C^{0,1/2}$-norm.
  Proposition~\ref{prop:holder-sharp} proves the asserted H\"older sharpness.

  Next, Lemma~\ref{lem:solution-regularity} and Proposition~\ref{prop:forcing} show that
  $u$ is a variational solution with zero initial value and continuous
  $H$-valued forcing. The definitions of $\beta$, $y_{j,m}$, and $f$ show
  that $\fra$, $u$, and $f$ are real-valued. Moreover, the forms associated
  with $\mathcal A(t)$ are measurable, uniformly bounded, and uniformly coercive,
  so uniqueness follows from Lions' theorem.

  Finally, for every $t\in[0,1]$, the sum defining $v(t)$ is finite, and
  $\phi,p_K\in D_0$. Hence Proposition~\ref{prop:frozen-domains} gives
  $u(t)\in D(A_H(t))$ for every $t\in[0,1]$. By
  Lemma~\ref{lem:solution-regularity},
  $\partial_tu\in\Leb^1(0,1;H)$, while
  Proposition~\ref{prop:forcing} gives
  $f\in\C([0,1];H)\subset\Leb^1(0,1;H)\cap\Leb^2(0,1;H)$.
  Thus, almost everywhere,
  \[
    A_H(t)u(t)=f(t)-\partial_tu(t),
  \]
  and consequently
  $A_H(\cdot)u(\cdot)\in\Leb^1(0,1;H)$. The same lemma gives
  $\partial_tu\notin\Leb^2(0,1;H)$. If $A_H(\cdot)u(\cdot)$ belonged to
  $\Leb^2(0,1;H)$, then
  $\partial_tu=f-A_H(\cdot)u(\cdot)$ would belong to
  $\Leb^2(0,1;H)$, contradicting
  Lemma~\ref{lem:solution-regularity}.
  This completes the proof.
\end{proof}

\begin{remark}[Growth of the maximal-regularity constant]
  For $N\in\mathbb N$, let
  \[
    \beta^{(N)}=\sum_{j=1}^N\beta_j,
    \qquad
    \fra_N=1+\eps\beta^{(N)},
    \qquad
    v^{(N)}=\sum_{j=1}^Nv_j,
    \qquad
    u_N=\rho\phi+v^{(N)}.
  \]
  We denote by
  \[
    \mathcal A_N(t)=L_{\fra_N(t,\cdot)}
  \]
  the corresponding form operator and set
  \[
    f_N(t)=\partial_tu_N(t)+\mathcal A_N(t)u_N(t).
  \]
  These functions are finite sums of smooth functions. Hence the problem
  associated with $\mathcal A_N$ has maximal $\Leb^2$-regularity. Moreover,
  the estimates above hold uniformly in $N$ and give
  \[
    \sup_{N\in\mathbb N}
    \norm{\fra_N-1}_{\C^{0,1/2}
      ([0,1];\Leb^\infty(0,\pi))}
    \le C\eps,
    \qquad
    \sup_{N\in\mathbb N}
    \norm{f_N}_{\Leb^2(0,1;H)}
    <\infty.
  \]
  On the other hand, the lower bound on the middle thirds of the blocks
  yields
  \[
    \norm{\partial_tu_N}_{\Leb^2(0,1;H)}^2
    \ge
    \frac{\pi\eps^2}{1536}
    \sum_{j=1}^N\frac1{j+1}
    \ge c\eps^2\log(N+1).
  \]

  Let $C_N$ be the best constant for which
  \[
    \norm{\partial_tw}_{\Leb^2(0,1;H)}
    +
    \norm{A_{N,H}(\cdot)w(\cdot)}_{\Leb^2(0,1;H)}
    \le
    C_N\norm{g}_{\Leb^2(0,1;H)}
  \]
  holds for every zero-initial-value solution of $\partial_tw+\mathcal A_N(t)w=g$.
  Applying this estimate to $(u_N,f_N)$ shows that, for every fixed
  $\eps>0$,
  \[
    C_N\ge c\eps\sqrt{\log(N+1)}.
  \]
\end{remark}

\section{Full-space and bounded-domain consequences}\label{sec:transfer}

In this section, we first extend the interval counterexample to the real line.
Afterwards, we tensorise it to construct counterexamples on $\R^d$.
Finally, we localise the full-space construction to give a counterexample on any bounded domain.
Here, the key ingredient is that $\phi$, $p_K$ and their first derivatives vanish at both endpoints.

For a function $z$ on $(0,\pi)$, let $E_0z$ denote its zero extension to
$\R$. For time-dependent functions, $E_0$ acts in the spatial variable.

\begin{lemma}[Zero extension of a vanishing flux]
  \label{lem:zero-extension-flux}
  Let
  \[
    q\in\Leb^1\bigl(0,1;\Sob^1(0,\pi)\bigr)
  \]
  and suppose that
  \[
    q(t,0)=q(t,\pi)=0
  \]
  for almost every $t\in(0,1)$. Then
  \[
    \partial_x(E_0q)=E_0(\partial_xq)
  \]
  in $\mathcal D'((0,1)\times\R)$. In particular, the spatial derivative of
  the flux extended by zero contains no endpoint Dirac masses.
\end{lemma}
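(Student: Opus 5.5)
We test the distributional identity against an arbitrary $\psi\in\C_c^\infty((0,1)\times\R)$ and reduce everything to a fibrewise integration by parts on $(0,\pi)$. By definition of the distributional derivative and of the zero extension,
\[
  \langle\partial_x(E_0q),\psi\rangle
  =-\int_0^1\int_\R E_0q\,\partial_x\psi\dd x\dd t
  =-\int_0^1\int_0^\pi q(t,x)\,\partial_x\psi(t,x)\dd x\dd t.
\]
The plan is to show that for almost every $t$ the inner integral equals $\int_0^\pi\partial_xq(t,x)\,\psi(t,x)\dd x$. Integrating in $t$ then gives $\int_0^1\int_\R E_0(\partial_xq)\,\psi\dd x\dd t$, which is exactly $\langle E_0(\partial_xq),\psi\rangle$.

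For the fibrewise step, fix $t$ outside a null set such that $q(t)\in\Sob^1(0,\pi)$ and $q(t,0)=q(t,\pi)=0$. Here the boundary values are understood as the values of the absolutely continuous representative of $q(t)$, which is how we interpret the hypothesis. The function $\psi(t,\cdot)$ restricted to $[0,\pi]$ is smooth. Integration by parts for the product of an absolutely continuous function and a smooth function on a compact interval gives
\[
  \int_0^\pi q\,\partial_x\psi\dd x
  =q(t,\pi)\psi(t,\pi)-q(t,0)\psi(t,0)-\int_0^\pi\partial_xq\,\psi\dd x .
\]
The boundary terms vanish by hypothesis, and they are precisely the would-be Dirac masses at $x=0$ and $x=\pi$. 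Without the vanishing hypothesis, one would instead get $\partial_x(E_0q)=E_0(\partial_xq)+q(\cdot,0)\delta_0-q(\cdot,\pi)\delta_\pi$.

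It remains to justify Fubini and measurability so that integrating the fibrewise identity in $t$ is legitimate. Since $\norm{q\,\partial_x\psi}_{\Leb^1}$ and $\norm{\partial_xq\,\psi}_{\Leb^1}$ are bounded by $\norm{\psi}_{W^{1,\infty}}\int_0^1\norm{q(t)}_{\Sob^1}\dd t<\infty$, both integrands are in $\Leb^1((0,1)\times(0,\pi))$. Strong measurability of $q$ as an $\Sob^1$-valued map gives joint measurability of $q$ and $\partial_xq$, because Bochner-measurable functions are a.e. limits of simple functions. Also, $E_0(\partial_xq)\in\Leb^1_{\mathrm{loc}}$, so it defines a distribution.

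The only mildly delicate point is interpreting the trace condition $q(t,0)=0$ for a.e. $t$ in terms of the continuous representative. This is handled by the fact that $\Sob^1(0,\pi)\hookrightarrow\C([0,\pi])$. Consequently, point evaluation at the endpoints is a bounded functional, so $t\mapsto q(t,0)$ is measurable and well defined a.e.
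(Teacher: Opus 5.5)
Your proposal is correct and matches the paper's proof: both test against $\psi$, integrate by parts in $x$ on $(0,\pi)$ for a.e.\ fixed $t$, and note that the boundary terms $q(t,0)\psi(t,0)-q(t,\pi)\psi(t,\pi)$ vanish by hypothesis. Your remarks on Fubini, joint measurability and the continuous representative make explicit what the paper leaves implicit.
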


\begin{proof}
  Let $\psi\in\C_c^\infty((0,1)\times\R)$. Integration by parts in the
  spatial variable gives
  \begin{align*}
    \left\langle\partial_x(E_0q),\psi\right\rangle
     & =-\int_0^1\int_0^\pi q(t,x)\partial_x\psi(t,x)\dd x\dd t \\
     & =\int_0^1\int_0^\pi \partial_xq(t,x)\psi(t,x)\dd x\dd t  \\
     & \quad+\int_0^1
    \bigl(q(t,0)\psi(t,0)-q(t,\pi)\psi(t,\pi)\bigr)\dd t.
  \end{align*}
  The last integral vanishes by the trace assumptions.
\end{proof}

\begin{proposition}[Counterexample on the real line]\label{prop:line}
  Set $\widetilde u:=E_0u$ and $\widetilde f:=E_0f$. With the coefficient
  \eqref{eq:fra-def}, the function $\widetilde u$ is the Lions variational solution
  on $\R$ with forcing $\widetilde f$. Moreover, $\fra$ is bounded, uniformly
  elliptic, and belongs to $\C^{0,1/2}([0,1];\Leb^\infty(\R))$, but
  \[
    \partial_t\widetilde u\notin\Leb^2(0,1;\Leb^2(\R)).
  \]
\end{proposition}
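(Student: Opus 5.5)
The plan is to verify directly that $\widetilde u$ satisfies the variational equation on $\R$. The setting is the Gelfand triple $\Sob^1(\R)\hookrightarrow\Leb^2(\R)\hookrightarrow\Sob^{-1}(\R)$, with the form $\int_\R\fra(t,x)\partial_x v\,\overline{\partial_x w}\dd x$. Uniqueness will then follow from Lions' theorem, and the remaining properties are inherited from the interval construction.

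\textbf{Regularity of $\widetilde u$.} For each fixed $t$, $u(t)=\rho(t)\phi+\sum y_{j,m}(t)p_{K_{j,m}}$ is a finite combination of functions vanishing at $0,\pi$. Hence $E_0$ maps $V$ isometrically into $\Sob^1(\R)$, and $\partial_x E_0u=E_0\partial_xu$. This gives $\widetilde u\in\C([0,1];\Sob^1(\R))$. For the time derivative, $E_0$ acts on $H$-valued objects, and $\partial_t\widetilde u=E_0\partial_tu\in\Leb^1(0,1;\Leb^2(\R))$ by Lemma~\ref{lem:solution-regularity}. To place $\partial_t\widetilde u$ in $\C([0,1];\Sob^{-1}(\R))$, I will use the adjoint of the restriction map $\Sob^1(\R)\to\Sob^1(0,\pi)$. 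This is slightly delicate because restriction lands in $\Sob^1$, not $\Sob^1_0$. However, the proof of Lemma~\ref{lem:oscillatory-modes} bounds $\int p_K\overline w$ by $\norm{P_K}_H\norm{\partial_xw}$ using only that $P_K$ vanishes at $0,\pi$ (since $\sin(nx)$ does). So the $V'$ bounds hold against all $w\in\Sob^1(0,\pi)$, and the uniform convergence argument of Lemma~\ref{lem:solution-regularity} goes through verbatim in $\Sob^{-1}(\R)$. Finally, $\widetilde u\in\Leb^2(0,1;\Sob^1(\R))\cap\Sob^1(0,1;\Sob^{-1}(\R))$ and $\widetilde u(0)=0$.

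\textbf{The equation.} Fix $t$ and take $w\in\Sob^1(\R)$. The form is
\[
\int_\R\fra\,\partial_x\widetilde u\,\overline{\partial_xw}=\int_0^\pi\fra\,\partial_xu\,\overline{\partial_xw}.
\]
Set $q=\fra(t,\cdot)\partial_xu(t)$. By Proposition~\ref{prop:frozen-domains}, $q\in\Sob^1(0,\pi)$, and $q(0)=q(\pi)=0$ because $\partial_x\phi$ and $\partial_xp_K$ vanish at the endpoints by \eqref{eq:endpoint-conditions}. Lemma~\ref{lem:zero-extension-flux}, or simply integration by parts at fixed $t$ with no boundary terms, gives
\[
\int_0^\pi q\,\overline{\partial_xw}=\int_0^\pi A_H(t)u(t)\,\overline w .
\]
Hence $\mathcal A_\R(t)\widetilde u(t)=E_0(A_H(t)u(t))$. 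Combining this with $A_H(t)u(t)=f(t)-\partial_tu(t)$ from the proof of Theorem~\ref{thm:main} yields
\[
\partial_t\widetilde u+\mathcal A_\R\widetilde u=E_0f=\widetilde f\in\C([0,1];\Leb^2(\R))
\]
a.e.\ in $t$, and for every $t$ by continuity of both sides in $\Sob^{-1}(\R)$.

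\textbf{Conclusion.} Coefficient properties follow from Proposition~\ref{prop:coefficient}, which is already stated on $\R^2$ with $\Leb^\infty(\R)$ norms; ellipticity holds for small $\eps$. Lions' theorem applies because the form is uniformly bounded and, after adding a multiple of the identity, coercive on $\Sob^1(\R)$; this gives uniqueness. The failure $\norm{\partial_t\widetilde u}_{\Leb^2(0,1;\Leb^2(\R))}=\norm{\partial_tu}_{\Leb^2(0,1;H)}=\infty$ follows from Lemma~\ref{lem:solution-regularity}.

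I expect the main obstacle to be the step that places $\partial_t\widetilde u$ in $\Sob^{-1}(\R)$. The flux cancellation at the boundary is immediate from the endpoint conditions, but the $V'$ estimates must be upgraded to test functions that do not vanish at $0,\pi$. As noted above, this reduces to the vanishing of the primitives $P_K$ at the endpoints.
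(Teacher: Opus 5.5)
Your proposal is correct and follows essentially the paper's route: the flux $q=\fra\partial_xu$ vanishes at $0$ and $\pi$ because of the endpoint conditions on $\phi$ and $p_K$, so zero extension commutes with the divergence-form operator; Lions' uniqueness on $\Sob^1(\R)$ then identifies $\widetilde u$, and restriction transfers the failure of $\partial_tu\in\Leb^2(0,1;H)$. The one difference is that the step you flag as the main obstacle is avoidable: the paper reads off $\partial_t\widetilde u\in\Leb^2(0,1;\Sob^{-1}(\R))$ directly from the distributional equation, since $\widetilde f$ and $\fra\partial_x\widetilde u$ lie in $\Leb^2(0,1;\Leb^2(\R))$; your alternative argument using $P_K(0)=P_K(\pi)=0$ is nonetheless correct.
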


\begin{proof}
  The coefficient properties follow from
  Proposition~\ref{prop:coefficient}.

  By Lemma~\ref{lem:solution-regularity},
  \[
    u\in W^{1,1}(0,1;H).
  \]
  Set
  \[
    q(t,\cdot):=\fra(t,\cdot)\partial_xu(t,\cdot).
  \]
  Boundedness of $\fra$ and the fact that $u\in\C([0,1];V)$ give
  \[
    q\in\Leb^1(0,1;H).
  \]
  The equation on $(0,\pi)$ yields
  \[
    \partial_xq=\partial_tu-f\in\Leb^1(0,1;H).
  \]
  Consequently,
  \[
    q\in\Leb^1\bigl(0,1;\Sob^1(0,\pi)\bigr).
  \]

  From \eqref{eq:u-def} and \eqref{eq:endpoint-conditions},
  \[
    \partial_xu(t,0)=\partial_xu(t,\pi)=0,
  \]
  since $\partial_x\phi(0)=\partial_x\phi(\pi)=0$ and
  $\partial_xp_K(0)=\partial_xp_K(\pi)=0$. Hence
  \[
    q(t,0)=q(t,\pi)=0
  \]
  for almost every $t$. Lemma~\ref{lem:zero-extension-flux} therefore gives
  \[
    \partial_x(E_0q)=E_0(\partial_xq)
    \quad\text{in }\mathcal D'((0,1)\times\R).
  \]

  The map $E_0:H\to\Leb^2(\R)$ is bounded. It maps $W^{1,1}(0,1;H)$ into
  $W^{1,1}(0,1;\Leb^2(\R))$ and commutes with the weak time derivative.
  Thus
  \[
    \partial_t\widetilde u=E_0(\partial_tu).
  \]
  Separately, the zero trace of $u$ gives
  \[
    \partial_x\widetilde u=E_0(\partial_xu).
  \]
  Since $\fra$ is defined on the whole real line,
  \[
    \fra\,\partial_x\widetilde u=E_0q.
  \]
  It follows that
  \[
    \partial_t\widetilde u
    -\partial_x\bigl(\fra\,\partial_x\widetilde u\bigr)
    =E_0(\partial_tu)-E_0(\partial_xq)
    =E_0f
    =\widetilde f
  \]
  in $\mathcal D'((0,1)\times\R)$. For
  $z,w\in\Sob^1(\R)$, the maps
  \[
    (z,w)\longmapsto
    \int_{\R}\fra(t,x)\partial_xz(x)\overline{\partial_xw(x)}\dd x
  \]
  define forms that are uniformly bounded and uniformly quasi-coercive on
  $\Sob^1(\R)$. Moreover,
  $\widetilde u\in\Leb^2(0,1;\Sob^1(\R))$, and the displayed equation itself
  gives
  $\partial_t\widetilde u\in\Leb^2(0,1;\Sob^{-1}(\R))$ because
  $\widetilde f\in\Leb^2(0,1;\Leb^2(\R))$ and
  $\fra\partial_x\widetilde u\in\Leb^2(0,1;\Leb^2(\R))$.
  Thus the uniqueness assertion in Lions' variational theorem
  \cite{Lions1961}
  shows that $\widetilde u$ is the unique variational solution with initial value
  zero and forcing $\widetilde f$. If
  $\partial_t\widetilde u\in\Leb^2(0,1;\Leb^2(\R))$, its restriction to $(0,\pi)$
  would put $\partial_tu$ in $\Leb^2(0,1;H)$, contradicting
  Lemma~\ref{lem:solution-regularity}. This completes the proof.
\end{proof}

\begin{proposition}[Counterexamples in full space]
  \label{prop:tensor}
  Let $d\ge2$. Then the isotropic coefficient matrix
  \[
    \frB_d(t,x)=\fra(t,x_1)\Id_d
    \in\C^{0,1/2}\bigl([0,1];
    \Leb^\infty(\R^d;\R^{d\times d})\bigr)
  \]
  is real symmetric, bounded, and uniformly elliptic. Moreover, there exist
  a real-valued $F\in\Leb^2(0,1;\Leb^2(\R^d))$ and a unique Lions variational
  solution $U$ with $U(0)=0$ such that
  \[
    \partial_tU-\operatorname{div}(\frB_d\nabla U)=F,
    \qquad
    \partial_tU\notin\Leb^2(0,1;\Leb^2(\R^d)).
  \]
\end{proposition}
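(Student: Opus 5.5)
Tensorise the real-line counterexample of Proposition~\ref{prop:line} with a fixed, time-independent transverse profile. Fix a real $\chi\in\C_c^\infty(\R^{d-1})$ with $\chi\not\equiv0$, write $x=(x_1,x')$, and set
\[
  U(t,x)=\widetilde u(t,x_1)\chi(x'),
\]
\[
  F(t,x)=\widetilde f(t,x_1)\chi(x')-\fra(t,x_1)\,\widetilde u(t,x_1)\,\Delta'\chi(x').
\]
Here $\Delta'$ is the Laplacian in $x'$. Since $\frB_d=\fra(t,x_1)\Id_d$ and the coefficient does not depend on $x'$, we have
\[
  \operatorname{div}(\frB_d\nabla U)=\partial_{x_1}(\fra\partial_{x_1}\widetilde u)\,\chi+\fra\,\widetilde u\,\Delta'\chi.
\]
This holds in $\mathcal D'((0,1)\times\R^d)$: test against $\psi$ and apply Fubini, using the one-dimensional distributional identity from Proposition~\ref{prop:line} for each fixed $x'$. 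Combined with the real-line equation, this gives $\partial_tU-\operatorname{div}(\frB_d\nabla U)=F$.

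The coefficient properties are immediate. The matrix $\frB_d$ is real symmetric, and $\lambda\abs\xi^2\le\frB_d\xi\cdot\xi\le\Lambda\abs\xi^2$ holds with the bounds from Proposition~\ref{prop:coefficient}. Moreover,
\[
  \norm{\frB_d(t)-\frB_d(s)}_{\Leb^\infty(\R^d;\R^{d\times d})}=\norm{\fra(t)-\fra(s)}_{\Leb^\infty(\R)}
\]
in the operator norm, so the $\C^{0,1/2}$ bound carries over.

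The regularity of $U$ and $F$ comes from tensor-product norms. Since $\widetilde u\in\C([0,1];\Sob^1(\R))$, we get $U\in\C([0,1];\Sob^1(\R^d))\subset\Leb^2(0,1;\Sob^1(\R^d))$. Since $\widetilde f\in\Leb^2(0,1;\Leb^2(\R))$ and $\fra\widetilde u\in\C([0,1];\Leb^2(\R))$, we get $F\in\Leb^2(0,1;\Leb^2(\R^d))$. The function $F$ is real-valued and $U(0)=0$.

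Next, $\partial_tU=\partial_t\widetilde u\otimes\chi$ lies in $W^{1,1}$-type spaces, and the equation itself gives $\partial_tU\in\Leb^2(0,1;\Sob^{-1}(\R^d))$. This uses that $\frB_d\nabla U\in\Leb^2(0,1;\Leb^2)$ and $F\in\Leb^2(0,1;\Leb^2)$. The forms $\int\fra\nabla z\cdot\overline{\nabla w}$ are uniformly bounded and quasi-coercive on $\Sob^1(\R^d)$. Hence $U$ is in the Lions class, and uniqueness follows from Lions' theorem exactly as in Proposition~\ref{prop:line}.

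Finally, for the failure of regularity, Fubini gives
\[
  \norm{\partial_tU(t)}_{\Leb^2(\R^d)}=\norm{\partial_t\widetilde u(t)}_{\Leb^2(\R)}\norm{\chi}_{\Leb^2(\R^{d-1})}.
\]
Since $\norm{\chi}_{\Leb^2}\neq0$ and $\partial_t\widetilde u\notin\Leb^2(0,1;\Leb^2(\R))$ by Proposition~\ref{prop:line}, it follows that $\partial_tU\notin\Leb^2(0,1;\Leb^2(\R^d))$.

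The main point needing care is that the transverse correction term $\fra\widetilde u\Delta'\chi$ stays in $\Leb^2$. It does, because it involves no derivatives of $\widetilde u$, so only $\widetilde u\in\C([0,1];\Leb^2(\R))$ is needed. The other point needing care is the identification of $\partial_tU$ as the weak time derivative. This follows by tensoring the $W^{1,1}(0,1;\Leb^2(\R))$ statement with the fixed $\chi$.
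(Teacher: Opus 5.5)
Your proposal is correct and follows the paper's proof: you use the same tensorisation $U=\widetilde u\otimes\chi$, the same corrected forcing $F=\widetilde f\otimes\chi-\fra\widetilde u\otimes\Delta'\chi$, and conclude with Lions' uniqueness. The only difference is cosmetic and comes in the last step: you use the exact identity $\norm{\partial_tU(t)}_{\Leb^2(\R^d)}=\norm{\chi}_{\Leb^2(\R^{d-1})}\norm{\partial_t\widetilde u(t)}_{\Leb^2(\R)}$, which relies on first identifying $\partial_tU=\partial_t\widetilde u\otimes\chi$ in $W^{1,1}$, whereas the paper argues by contradiction with the bounded left inverse $P_\chi\Psi(x_1)=\norm{\chi}_{\Leb^2}^{-2}\int_{\R^{d-1}}\Psi(x_1,x')\chi(x')\dd x'$ and so does not need that identification.
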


\begin{proof}
  Fix $d\ge2$. Let $\chi\in\C_c^\infty(\R^{d-1})$ be nonzero and real-valued,
  and denote by $\Delta'$ the Laplacian in the $x'$-variables. On $\R^d$,
  we set
  \[
    U=\widetilde u\otimes\chi,
    \qquad \frB_d(t,x)=\fra(t,x_1)\Id_d,
    \qquad
    F=\widetilde f\otimes\chi-\fra\widetilde u\otimes\Delta'\chi.
  \]
  Tensoring with $\chi$ defines bounded maps on the relevant Sobolev and
  Bochner spaces, and a straightforward calculation gives
  \[
    \partial_tU-\operatorname{div}(\frB_d\nabla U)=F
  \]
  in distributions.
  Here $F\in\Leb^2(0,1;\Leb^2(\R^d))$ and $U(0)=0$. Hence Lions'
  uniqueness theorem identifies $U$ as the variational solution.

  Let us now explain the failure of maximal regularity. For the nonzero
  real-valued function $\chi\in\Leb^2(\R^{d-1})$, define
  \[
    P_\chi\Psi(x_1)
    :=
    \norm{\chi}_{\Leb^2(\R^{d-1})}^{-2}
    \int_{\R^{d-1}}\Psi(x_1,x')\chi(x')\dd x'.
  \]
  By Cauchy--Schwarz and Fubini,
  \[
    P_\chi:
    \Leb^2(\R^d)\longrightarrow\Leb^2(\R)
  \]
  is bounded. For distributionally differentiable functions of time,
  bounded linearity gives
  \[
    \partial_t(P_\chi U)=P_\chi(\partial_tU)
  \]
  in the distributional sense. Since $P_\chi U=\widetilde u$, the assumption
  \[
    \partial_tU\in\Leb^2\bigl(0,1;\Leb^2(\R^d)\bigr)
  \]
  would imply
  \[
    \partial_t\widetilde u=P_\chi(\partial_tU)
    \in\Leb^2\bigl(0,1;\Leb^2(\R)\bigr),
  \]
  contradicting Proposition~\ref{prop:line}. This completes the proof.
\end{proof}

To treat an arbitrary bounded domain, we now localise the compactly supported
full-space solution by parabolic rescaling.

\begin{proof}[Proof of Corollary~\ref{cor:bounded-domains}]
  We keep the parameter $\eps>0$ in the construction free for the moment.
  If $d=1$, set
  \[
    W=\widetilde u,
    \qquad
    G=\widetilde f,
    \qquad
    \frB_d(s,z)=\fra(s,z)\Id_1.
  \]
  If $d\ge2$, let $Q'=(-1,1)^{d-1}$, choose a nonzero real
  $\chi\in\C_c^\infty(Q')$, and, writing $z=(z_1,z')$, set
  \[
    W(s,z)=\widetilde u(s,z_1)\chi(z'),
    \qquad
    \frB_d(s,z)=\fra(s,z_1)\Id_d,
  \]
  and
  \[
    G(s,z)=\widetilde f(s,z_1)\chi(z')
    -\fra(s,z_1)\widetilde u(s,z_1)\Delta'\chi(z').
  \]
  By Lemma~\ref{lem:solution-regularity}, Proposition~\ref{prop:forcing}, and the
  extension, product, and projection arguments from the proofs of
  Propositions~\ref{prop:line} and~\ref{prop:tensor}, in either case
  \[
    \partial_sW-\operatorname{div}_z(\frB_d\nabla_zW)=G
    \quad\text{in }\mathcal D'((0,1)\times\R^d),
  \]
  with
  \[
    W\in\C([0,1];\Sob^1(\R^d))
    \cap W^{1,1}(0,1;\Leb^2(\R^d)),
    \qquad
    G\in\C([0,1];\Leb^2(\R^d)),
  \]
  and
  \[
    \partial_sW\notin\Leb^2(0,1;\Leb^2(\R^d)).
  \]

  By the support properties of $\rho$ and the time blocks, $W$ and $G$
  vanish on neighbourhoods of both $s=0$ and $s=1$. We extend them by zero
  in the time variable and retain the already defined coefficient $\frB_d$ on
  $\R\times\R^d$. The preceding equation then holds on $\R\times\R^d$.
  Moreover, if
  \[
    K=
    \begin{cases}
      [0,\pi],                    & d=1,   \\
      [0,\pi]\times\overline{Q'}, & d\ge2,
    \end{cases}
  \]
  then
  \[
    \supp W(s,\cdot)\cup\supp G(s,\cdot)\subset K
    \qquad\text{for every }s\in\R.
  \]

  Since $\Omega$ is nonempty and open, there exist $x_0\in\R^d$ and
  $0<r\le1$ such that
  \[
    x_0+rK\Subset\Omega.
  \]
  For $(t,x)\in[0,1]\times\Omega$, set
  \[
    s=r^{-2}t,
    \qquad
    z=r^{-1}(x-x_0),
  \]
  where $z_1$ denotes the first coordinate of $z$, with $z_1=z$ when
  $d=1$. Define
  \[
    \begin{aligned}
      \frb(t,x) & =\fra(s,z_1),
                & \frB(t,x)     & =\frb(t,x)\Id_d=\frB_d(s,z), \\
      U(t,x)    & =W(s,z),
                & F(t,x)        & =r^{-2}G(s,z).
    \end{aligned}
  \]
  The parabolic change of variables gives
  \[
    U'-\operatorname{div}_x(\frB\nabla_xU)=F
    \quad\text{in }\mathcal D'((0,1)\times\Omega).
  \]
  Since $U(t,\cdot)$ and $F(t,\cdot)$ are supported in the fixed compact
  set $x_0+rK\Subset\Omega$, the regularity of $W$ and $G$ gives
  \[
    U\in\C([0,1];V_\Omega)\cap W^{1,1}(0,1;H_\Omega),
    \qquad
    F\in\C([0,1];H_\Omega).
  \]
  The zero extensions in the time variable also satisfy
  $W\in\C(\R;\Sob^1(\R^d))$ and
  \[
    \partial_sW
    =G+\operatorname{div}_z(\frB_d\nabla_zW)
    \in\C(\R;\Sob^{-1}(\R^d)).
  \]
  The spatial dilation and restriction maps are bounded in these spaces, so
  $U\in\C^1([0,1];V_\Omega')$. All terms in the equation are therefore
  continuous with values in $V_\Omega'$, and the distributional identity
  holds for every $t\in[0,1]$. Moreover, $U(0)=0$. Uniform ellipticity and
  Lions' theorem identify $U$ as the unique variational solution with initial
  value zero and forcing $F$.

  Proposition~\ref{prop:coefficient} and the time rescaling give
  \[
    \norm{\frB-\Id_d}_{\C^{0,1/2}
      ([0,1];\Leb^\infty(\Omega;\R^{d\times d}))}
    \le Cr^{-1}\eps.
  \]
  We choose $\eps>0$ sufficiently small that $\frB$ is uniformly elliptic and
  the right-hand side is less than $\delta$. For $\sigma,\tau\in[0,1]$,
  the inclusion $x_0+rK\Subset\Omega$ gives
  \[
    \norm{\frB(r^2\sigma,\cdot)-\frB(r^2\tau,\cdot)}
    _{\Leb^\infty(\Omega;\R^{d\times d})}
    \ge
    \norm{\fra(\sigma,\cdot)-\fra(\tau,\cdot)}
    _{\Leb^\infty(0,\pi)}.
  \]
  Proposition~\ref{prop:holder-sharp} now proves the asserted failure of
  higher H\"older regularity.

  Finally, another change of variables gives
  \[
    \int_0^1\norm{U'(t)}_{H_\Omega}^2\dd t
    =r^{d-2}\int_0^{r^{-2}}
    \norm{\partial_sW(s)}_{\Leb^2(\R^d)}^2\dd s
    =r^{d-2}\int_0^1
    \norm{\partial_sW(s)}_{\Leb^2(\R^d)}^2\dd s
    =\infty.
  \]
  Since $F\in\Leb^2(0,1;H_\Omega)$, the equation also rules out square
  integrability of the elliptic part. This completes the proof.
\end{proof}

\section{Comparison with endpoint hypotheses in the literature}
\label{sec:endpoint-exclusions}

In this section, we compare the construction with the known sufficient
hypotheses at the endpoint that were recalled in the introduction. The exclusions follow directly from
the corresponding theorems: if one of their sufficient hypotheses held, it
would contradict Theorem~\ref{thm:main}.

\begin{corollary}[Failure of the known sufficient hypotheses at the endpoint]
  Let $X=\Leb^\infty(0,\pi)$. Then the following assertions hold.
  \begin{enumerate}[label=\textup{(\roman*)}]
    \item the associated form does not have bounded variation in the sense of
          Dier \cite[Theorem~4.1]{Dier2015}, and
          $\fra\notin BV([0,1];X)$;
    \item the form path does not satisfy the Dini hypothesis of
          Haak--Ouhabaz \cite[Theorem~1.2]{HaakOuhabaz2015}: there is no
          non-decreasing function $\omega:[0,1]\to[0,\infty)$ such that
          \[
            \abs{\langle(\mathcal A(t)-\mathcal A(s))v,w\rangle_{V',V}}
            \le\omega(\abs{t-s})\norm{v}_V\norm{w}_V
          \]
          for all $s,t\in[0,1]$ and $v,w\in V$, and
          \[
            \int_0^1\frac{\omega(r)}{r^{3/2}}\dd r<\infty;
          \]
          in particular, no such modulus controls
          $\norm{\fra(t)-\fra(s)}_X$;
    \item neither $\mathcal A:[0,1]\to\mathcal L(V,V')$ nor
          $\fra:[0,1]\to X$ is piecewise $\Sob^{1/2}$ on any finite partition,
          as in
          \cite[Theorem~2.2]{AchacheOuhabaz2019};
    \item there is no $M<\infty$ for which
          \begin{equation}\label{eq:auscher-egert-condition}
            \sup_{\substack{I\subset[0,1]\\ \abs{I}>0}}\frac1{\abs{I}}
            \iint_{I\times I}
            \frac{\abs{\fra(t,x)-\fra(s,x)}^2}{\abs{t-s}^2}\dd s\dd t
            \le M
            \qquad\text{for a.e. }x\in(0,\pi);
          \end{equation}
          in other words, $\fra$ fails the scale-invariant square condition of
          Auscher--Egert
          \cite[Theorem~2, condition~(8)]{AuscherEgert2016}.
  \end{enumerate}
\end{corollary}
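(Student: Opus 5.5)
The plan is to argue by contradiction, exactly as the section introduction indicates. Each cited result gives maximal $\Leb^2$-regularity for zero initial data and $f\in\Leb^2(0,1;H)$ under its hypothesis. Theorem~\ref{thm:main} produces such an $f$ (even $f\in\C([0,1];H)$) whose unique variational solution has $\partial_tu\notin\Leb^2(0,1;H)$. So it suffices to check, for each item, that the remaining standing assumptions of the cited theorem hold for our coefficient. Those assumptions are: symmetric, uniformly bounded and coercive forms with common domain $V$; continuity of the form path, which follows from Proposition~\ref{prop:coefficient}; and the uniform Kato square-root property $D(A(t)^{1/2})=V$ with uniform equivalence of norms, which holds trivially for real symmetric forms, since $\norm{A(t)^{1/2}v}_H^2=\fra_t(v,v)\simeq\norm{v}_V^2$. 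In each item we also transfer the statement about $\mathcal A$ to $\fra$, using the form estimate $\norm{\mathcal A(t)-\mathcal A(s)}_{\mathcal L(V,V')}\le\norm{\fra(t)-\fra(s)}_X$.

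\textbf{(i) and (ii).} For (i), if $\fra\in BV([0,1];X)$, then the form path $\mathcal A$ has bounded variation in $\mathcal L(V,V')$. Dier's theorem for symmetric forms of bounded variation would then give maximal regularity, which is a contradiction. For (ii), suppose such a modulus $\omega$ exists. Haak--Ouhabaz then gives maximal regularity, again a contradiction. Moreover, any modulus with $\norm{\fra(t)-\fra(s)}_X\le\omega(\abs{t-s})$ controls the forms, so it cannot satisfy the integrability condition either.

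\textbf{(iii).} Suppose $\fra$ were piecewise $\Sob^{1/2}(0,T;X)$. By the form estimate, $\mathcal A$ would then be piecewise $\Sob^{1/2}$ in $\mathcal L(V,V')$, and the Kato property holds as noted. The step I expect to need care is the local smallness condition~(2.1) of Achache--Ouhabaz. I would first check whether their condition is automatic for $\Sob^{1/2}$ paths: since the Gagliardo integrals over small diagonal neighbourhoods tend to zero by absolute continuity of the integral, this seems likely, perhaps after refining the partition. If instead (2.1) is an independent hypothesis, I would state (iii) carefully as ``the hypotheses of \cite[Theorem~2.2]{AchacheOuhabaz2019} fail''. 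In that form the contradiction still excludes the whole hypothesis package, and I would phrase the conclusion accordingly. There is also a direct route for $\fra$: piecewise $\Sob^{1/2}$ regularity of $\fra$ implies that of $\mathcal A$, so excluding $\mathcal A$ excludes $\fra$.

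\textbf{(iv).} This item is where a direct computation is a useful sanity check; the contradiction argument alone suffices. If \eqref{eq:auscher-egert-condition} held with some $M$, the hypotheses of \cite[Theorem~2]{AuscherEgert2016} would be met: a real symmetric, bounded, elliptic coefficient satisfying condition~(8). That theorem would give maximal regularity, contradicting Theorem~\ref{thm:main}. As a direct check, fix $x$ and take $I=J_j$. On $J_j$, the difference $\fra(t,x)-\fra(s,x)$ is $\eps\sum_m K_{j,m}^{-1}(\sin\theta_{j,m}(t)-\sin\theta_{j,m}(s))\cos(K_{j,m}x)$. Restricting to scales $\abs{t-s}\sim K_{j,m}^{-2}$ should give a contribution of order $\eps^2\cos^2(K_{j,m}x)$ from each mode. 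Summing over $m$ gives roughly $\eps^2 M_j$ for typical $x$, which is unbounded in $j$. I would mention this only as a remark and rely on the contradiction for the proof.
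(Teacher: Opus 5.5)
Items (i), (ii) and (iv) follow the paper's argument. Each is a contradiction with Theorem~\ref{thm:main}, and the coefficient statements transfer to the forms through $\norm{\mathcal A(t)-\mathcal A(s)}_{\mathcal L(V,V')}\le\norm{\fra(t)-\fra(s)}_X$. In (i) the paper takes Dier's non-decreasing function to be $t\mapsto\operatorname{Var}_X(\fra;[0,t])$, which is what your bounded-variation remark amounts to. Your Kato square-root check is also the paper's.

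\textbf{The gap is in (iii).} Condition (2.1) of Achache--Ouhabaz is not part of the $\Sob^{1/2}$ hypothesis. It is a smallness condition, on each piece $I$ of the partition, on
\[
  \sup_{t\in I}\int_I\frac{\norm{\mathcal A(t)-\mathcal A(s)}_{\mathcal L(V,V')}^2}{\abs{t-s}}\dd s .
\]
This is a supremum in $t$ of a single integral, not a portion of the Gagliardo double integral with weight $\abs{t-s}^{-2}$. Absolute continuity of the double integral near the diagonal gives no control of this supremum. In fact $\Sob^{1/2}$ does not imply it. The bounded continuous scalar path $g(t)=1/\log\log(1/t)$ for $0<t<e^{-e}$, with $g(0)=0$, lies in $\Sob^{1/2}$. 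Yet $\int_0^\delta\abs{g(s)-g(0)}^2s^{-1}\dd s=\infty$ for every small $\delta>0$, so refining the partition does not help. Your first route therefore fails. Your fallback, that the full hypothesis package of Achache--Ouhabaz fails, is strictly weaker than (iii), which asserts that piecewise $\Sob^{1/2}$ regularity alone fails.

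The missing step is short and uses the $\frac12$-H\"older bound of this particular coefficient. Proposition~\ref{prop:coefficient} and the form estimate give $\norm{\mathcal A(t)-\mathcal A(s)}_{\mathcal L(V,V')}^2\le C\eps^2\abs{t-s}$. Hence the displayed supremum is at most $C\eps^2\abs{I}$ for every interval $I\subset[0,1]$, and (2.1) holds on every sufficiently fine partition. Now suppose $\mathcal A$ were piecewise $\Sob^{1/2}$. Refine its partition by such a fine one; restriction to subintervals preserves $\Sob^{1/2}$. Then Achache--Ouhabaz applies together with the Kato property, contradicting Theorem~\ref{thm:main}. The statement for $\fra$ then follows from the form estimate, as you say.
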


\begin{proof}
  We first consider bounded variation. If the associated symmetric coercive
  form family had bounded variation in Dier's sense, his theorem
  \cite[Theorem~4.1]{Dier2015} would prove maximal
  $\Leb^2$-regularity in $H$, contrary to Theorem~\ref{thm:main}. Moreover,
  \[
    \norm{\mathcal A(t)-\mathcal A(s)}_{\mathcal L(V,V')}
    \le\norm{\fra(t)-\fra(s)}_X.
  \]
  Indeed, if $\fra\in BV([0,1];X)$, let
  \[
    G(t):=\operatorname{Var}_X(\fra;[0,t]),
  \]
  where $\operatorname{Var}_X(\fra;[a,b])$ denotes the total variation
  of the $X$-valued path $\fra$ on $[a,b]$. Then $G$ is increasing and
  \[
    \abs{\langle(\mathcal A(t)-\mathcal A(s))v,w\rangle_{V',V}}
    \le\bigl(G(t)-G(s)\bigr)\norm{v}_V\norm{w}_V,
    \qquad 0\le s\le t\le1.
  \]
  Thus bounded variation of the coefficient path would imply Dier's
  bounded-variation estimate for the forms and is impossible.

  If the form path admitted a modulus $\omega$ as in part~\textup{(ii)},
  then \cite[Theorem~1.2]{HaakOuhabaz2015} would give maximal
  $\Leb^2$-regularity for zero initial data, again contradicting
  Theorem~\ref{thm:main}. The coefficient estimate
  \[
    \abs{\langle(\mathcal A(t)-\mathcal A(s))v,w\rangle_{V',V}}
    \le\norm{\fra(t)-\fra(s)}_X\norm{v}_V\norm{w}_V
  \]
  shows that a Dini modulus for $\fra$ would give the excluded form modulus.

  Let us next verify the two side conditions in
  \cite[Theorem~2.2]{AchacheOuhabaz2019}. The $H$-realisations $A_H(t)$ are
  positive self-adjoint, and the
  spectral theorem and uniform ellipticity give
  \[
    \norm{A_H(t)^{1/2}v}_H^2
    =\langle\mathcal A(t)v,v\rangle_{V',V}
    =\int_0^\pi\fra(t,x)\abs{\partial_xv(x)}^2\dd x
    \simeq\norm{v}_V^2
  \]
  uniformly in $t$; hence the uniform Kato square-root property holds. Also,
  Proposition~\ref{prop:coefficient} and the preceding operator estimate imply
  for every interval $I\subset[0,1]$ that
  \[
    \sup_{t\in I}\int_I
    \frac{\norm{\mathcal A(t)-\mathcal A(s)}_{\mathcal L(V,V')}^2}{\abs{t-s}}\dd s
    \le C\eps^2\abs{I}.
  \]
  A sufficiently fine finite partition therefore gives the local smallness
  required in condition~(2.1) of that paper. If $\mathcal A$ were piecewise
  $\Sob^{1/2}$, one could refine its partition by this smallness partition and
  apply the cited theorem, contradicting Theorem~\ref{thm:main}.
  Piecewise $\Sob^{1/2}$ regularity of $\fra$ would imply the same regularity of
  $\mathcal A$ by the displayed estimate, so it fails, too.

  Finally, under boundedness and ellipticity, condition
  \eqref{eq:auscher-egert-condition} is precisely the sufficient coefficient
  hypothesis in \cite[Theorem~2, condition~(8)]{AuscherEgert2016} for maximal
  $\Leb^2$-regularity of the Dirichlet divergence-form operator on the open
  set $(0,\pi)$.
\end{proof}

$ $
\vfill

\end{document}